\newcommand{\ph}{\varphi}
\newcommand{\eps}{\varepsilon}
\newcommand{\ulim}{\varlimsup}
\newcommand{\llim}{\varliminf}
\newcommand{\NN}{\mathbb{N}}
\newcommand{\RR}{\mathbb{R}}
\newcommand{\DDD}{\mathcal{D}}
\newcommand{\CCC}{\mathcal{C}}
\newcommand{\LLL}{\mathcal{L}}
\newcommand{\MMM}{\mathcal{M}}
\newcommand{\GGG}{\mathcal{G}}
\newcommand{\JJJ}{\mathcal{J}}
\newcommand{\FFF}{\mathcal{F}}
\newcommand{\one}{\mathbf{1}}
\newcommand{\dH}{d_{\mathrm{Ham}}}
\newcommand{\htop}{h_{\mathrm{top}}}
\newcommand{\vn}{\mathbf{n}}
\newcommand{\Cp}{C^{\mathcal{P}}}
\newcommand{\Cs}{C^{\mathcal{S}}}
\newtheorem{theorem}{Theorem}[section]
\newtheorem{lemma}[theorem]{Lemma}
\newtheorem{proposition}[theorem]{Proposition}
\newtheorem{corollary}[theorem]{Corollary}
\newtheorem*{thma*}{Theorem}
\theoremstyle{remark}
\newtheorem{remark}[theorem]{Remark}
\numberwithin{equation}{section}
\begin{document}

\title{Positive entropy equilibrium states}
\author{Vaughn Climenhaga}
\author{Van Cyr}
\address{Dept.\ of Mathematics, University of Houston, Houston, TX}
\address{Dept.\ of Mathematics, Bucknell University, Lewisburg, PA}
\email{climenha@math.uh.edu}
\email{van.cyr@bucknell.edu}
\date{\today}

\begin{abstract}
For transitive shifts of finite type, and more generally for shifts with specification, it is well-known that every equilibrium state for a H\"older continuous potential has positive entropy as long as the shift has positive topological entropy.  We give a non-uniform specification condition under which this property continues to hold, and demonstrate that it does not necessarily hold for other non-uniform versions of specification that have been introduced elsewhere.
\end{abstract}
\thanks{The first author is partially supported by NSF grants DMS-1362838 and
DMS-1554794.}
\maketitle

\section{Introduction}

Given a compact metric space $X$, a continuous map $f\colon X\to X$, and a continuous potential function $\ph\colon X\to \RR$, an \emph{equilibrium state} for $(X,f,\ph)$ is an $f$-invariant measure realising the supremum in the variational principle $P(\ph) = \sup_{\mu}(h_\mu(f) + \int \ph\,d\mu)$.
It is often important to know under what conditions an equilibrium state is forced to have positive entropy, or equivalently, for which potentials we have
\begin{equation}\label{eqn:Pbig}
P(\ph) > \sup_\mu \int \ph\,d\mu.
\end{equation}
Following \cite{IRRL12}, a potential satisfying \eqref{eqn:Pbig} will be called \emph{hyperbolic}.

If $(X,\sigma)$ is a transitive subshift of finite type (SFT) with positive topological entropy, then every H\"older potential is hyperbolic.  This also holds for all systems with the specification property \cite[Theorem 6.1]{CFT}.  

The importance of~\eqref{eqn:Pbig} is discussed in~\cite{jB04}; see \cite{DKU90,jB01} for its consequences regarding uniqueness of equilibrium states, and \cite{mR83,gK84,BK90} for its connection to quasi-compactness of the transfer operator, which has implications for the statistical properties of the system.

In \cite{jB04}, Buzzi considers continuous piecewise monotonic interval maps $f$ and shows that if $f$ is topologically transitive and $\ph$ is H\"older continuous in the natural coding via the branch partition, then~\eqref{eqn:Pbig} holds.  Buzzi conjectured that the result remains true without the assumption that the map  $f$ is continuous, but so far this question remains open.





We offer partial progress towards this conjecture by giving a general condition under which every H\"older potential satisfies \eqref{eqn:Pbig}.  Our condition is formulated in terms of the symbolic representation of $f$, and can be thought of as a stronger version of the \emph{almost specification} property \cite{PS07,dT12}.

Given a shift space $X$, we write $\LLL$ for the \emph{language} of $X$ (the set of all finite words appearing in some element of $X$).  A \emph{prefix} of a word $w\in \LLL$ is any word of the form $w_1 \cdots w_k$ for some $k\leq |w|$; similarly, a \emph{suffix} of $w$ is any word of the form $w_k \cdots w_{|w|}$.  We say that a subset $\GGG\subset \LLL$ has \emph{specification} if there is $\tau\in \NN$ such that for every $v,w\in \GGG$ there is $u\in \LLL$ with $|u|\leq \tau$ such that $v'uw' \in \GGG$ whenever $v'$ is a suffix of $v$ and $w'$ is a prefix of $w$ with $v',w'\in \GGG$.  (This includes the case when $v'=v$ and $w'=w$.)  

Given a nondecreasing function $g\colon \NN\to \NN$, the language $\LLL$ is said to be \emph{$g$-Hamming approachable} by $\GGG$ if every sufficiently long $w\in \LLL$ can be transformed into a word in $\GGG$ by changing no more than $g(|w|)$ symbols.

\begin{theorem}\label{thm:hyperbolic}
Let $X$ be a shift space on a finite alphabet with positive topological entropy, and $\LLL$ its language.  If there is a function $g\colon \NN\to \NN$ with $\lim_{n\to\infty}  g(n)/\log(n)= 0$ and a set $\GGG\subset \LLL$ with specification such that $\LLL$ is $g$-Hamming approachable by $\GGG$, then every H\"older continuous potential on $X$ is hyperbolic.
\end{theorem}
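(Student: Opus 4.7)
The plan is to reduce the result to the classical specification case by passing to a suitable subsystem. Let $Y \subseteq X$ denote the closed $\sigma$-invariant set obtained as the closure of all bi-infinite sequences that can be written as concatenations of $\GGG$-words glued together by specification bridges. Iterated application of the specification of $\GGG$ shows that any finite concatenation of $\GGG$-words with bridges is again in $\GGG$, so every $v \in \LLL(Y)$ is a subword of some $V \in \GGG$. I will first verify that $\LLL(Y)$ inherits a classical subshift specification: given $v, w \in \LLL(Y)$, embed them as subwords of $V, W \in \GGG$, apply $\GGG$-specification to produce $u$ with $|u| \leq \tau$ and $V u W \in \GGG$, and read off a bridge of bounded length between $v$ and $w$ inside $V u W$. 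Some care is required to keep the bridge length uniform in $v, w$, possibly by replacing $\GGG$ with its closure under suffixes and prefixes and checking that specification persists.

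Next, I show $\htop(Y) = \htop(X) > 0$. Hamming-approachability provides for each long $w \in \LLL_n(X)$ some $\tilde w \in \GGG_n \subseteq \LLL_n(Y)$ with $\dH(w, \tilde w) \leq g(n)$; the resulting map is at most $\binom{n}{g(n)} |A|^{g(n)} = e^{O(g(n) \log n)} = e^{o(n)}$-to-one, using $g(n) = o(\log n)$, so $|\LLL_n(X)| \leq |\LLL_n(Y)|\,e^{o(n)}$ and the entropies agree. I then show that $q_Y := \sup_{\mu \in \MMM(Y,\sigma)} \int \ph\,d\mu$ equals $q$. The inequality $q_Y \leq q$ is immediate. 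For the reverse, fix $\eps > 0$ and take ergodic $\mu \in \MMM(X,\sigma)$ with $\int \ph\,d\mu \geq q - \eps$; Birkhoff's theorem yields $v_N \in \LLL_N(X)$ with $S_N \ph(v_N) \geq N(q - 2\eps)$ for $N$ large, and Hamming-approximation provides $\tilde v_N \in \GGG_N$ with $|S_N \ph(v_N) - S_N \ph(\tilde v_N)| \leq C g(N) = o(N)$ by H\"older continuity of $\ph$. Iteratively concatenating copies of $\tilde v_N$ via specification produces a pre-periodic point in $Y$ whose orbit measure $\mu_y \in \MMM(Y,\sigma)$ satisfies $\int \ph\,d\mu_y \geq q - 3\eps$; sending $\eps \to 0$ gives $q_Y \geq q$.

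Finally, Theorem~6.1 of \cite{CFT} applied to $(Y, \sigma, \ph|_Y)$---a shift with specification and positive topological entropy---yields $P_Y(\ph) > q_Y = q$. Since $Y \subseteq X$, we conclude $P_X(\ph) \geq P_Y(\ph) > q$, establishing hyperbolicity of $\ph$. I expect the main obstacle to be the first step: the paper's specification condition is phrased for a subset $\GGG \subseteq \LLL$ with explicit suffix/prefix constraints rather than for a subshift language, so transferring it to $\LLL(Y)$ with a uniform bridge bound requires genuine care. An alternative route, should this reduction prove awkward, is to adapt the proof of Theorem~6.1 of \cite{CFT} directly, replacing each appeal to subshift specification with an application of the subset specification of $\GGG$ and absorbing the Hamming errors (of size $O(g(n))$) into additive error terms controlled by $g(n) = o(\log n)$.
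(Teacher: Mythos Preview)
Your reduction to a subsystem with classical specification has a genuine gap. The subshift $Y$ generated by $\GGG$ need not have the classical specification property, and the $\beta$-shifts already witness this failure: there $\GGG$ can be taken to be the free-concatenation set $G^*$ coming from the natural coding, the subshift it generates is the entire $\beta$-shift $X$, and for generic $\beta$ the $\beta$-shift does \emph{not} have specification. Your proposed fix of closing $\GGG$ under prefixes and suffixes does not help: given $v,w\in\LLL(Y)$, embedding them as subwords of $V,W\in\GGG$ and gluing to obtain $VuW\in\GGG$ with $|u|\le\tau$ still leaves the bridge from $v$ to $w$ (the suffix of $V$ after $v$, then $u$, then the prefix of $W$ before $w$) of uncontrolled length. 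The paper's specification hypothesis on $\GGG$ only guarantees $v'uw'\in\GGG$ for suffixes $v'$ of $V$ and prefixes $w'$ of $W$ that themselves lie in $\GGG$, which says nothing about general subwords. So Theorem~6.1 of \cite{CFT} is simply not available, and your ``alternative route'' of adapting its proof is in fact the entire content of the theorem.

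The paper takes a direct combinatorial route rather than a reduction. It first upgrades $\GGG$ to a set $\FFF$ with free concatenation (via a synchronising-word argument, Proposition~\ref{prop:get-free}), and then, for each long $w\in\LLL_n$, manufactures many distinct words in $\LLL_n$ as follows: choose a composition $\vn=(n_1,\dots,n_k)$ of $n$ with $k\approx\delta n$ pieces of length divisible by $2m$; Hamming-approximate each piece $w_{(N_i,N_{i+1}-m]}$ by a word in $\FFF$; and insert between consecutive pieces a ``marker'' $s^i\in\FFF_m$ chosen (via a counting lemma) to be Hamming-\emph{far} from the corresponding stretch of $w$. The markers make the composition $\vn$ approximately recoverable from the output word, so the map $\psi$ from compositions to $\LLL_n$ has controlled multiplicity. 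The sublogarithmic bound on $g$ enters exactly here: by Lemma~\ref{lem:gamma-L} it forces both the change in $\Phi$ and the multiplicity loss to be dominated by the entropy $\approx\delta|\log\delta|$ coming from the number of compositions. Your observations that $h(\GGG)=\htop(X)$ via the Hamming-ball count, and that $|\Phi(v)-\Phi(w)|\le V\,\dH(v,w)$ for H\"older $\ph$, are correct and are indeed ingredients the paper uses; what is missing from your proposal is the marker construction that substitutes for the unavailable specification.
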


An important class of shifts satisfying the conditions of the theorem is given by the $\beta$-shifts, which code the transformations $x\mapsto \beta x \pmod 1$ for $\beta>1$.  In this case $g(n)=1$ for every $n$, and it was already shown in \cite[Proposition 3.1]{CTb} that every H\"older potential is hyperbolic.  The proof there relied strongly on the lexicographic structure of the $\beta$-shifts; in particular it does not apply to their factors.  Our approach here does pass to factors.

\begin{proposition}\label{prop:factors}
Let $X$ be a shift space satisfying the hypotheses of Theorem \ref{thm:hyperbolic}.  Then every subshift factor of $X$ satisfies them as well.
\end{proposition}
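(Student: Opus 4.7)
The plan is to use the Curtis--Hedlund--Lyndon theorem to realize any factor map $\pi\colon X\to Y$ between subshifts as a sliding block code, push $\GGG_X$ forward along this code to define $\GGG_Y$, and verify that both specification and $g$-Hamming approachability survive with at most a controlled change of constants. Concretely, fix $r\in \NN$ such that $(\pi x)_n$ depends only on $x_{n-r},\dots,x_{n+r}$; this local rule induces a map $\Pi$ from $\{w\in \LLL_X : |w|\geq 2r+1\}$ to $\LLL_Y$ that shortens words by $2r$. As a first candidate I would take $\GGG_Y := \{\Pi(w) : w\in \GGG_X,\ |w|\geq 2r+1\}$.

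Hamming approachability is the easier step. Given $u\in \LLL_Y$ of length $n$, I would use the surjectivity of $\pi$ to lift it to some $w\in \LLL_X$ of length $n+2r$ with $\Pi(w)=u$, apply the hypothesis on $\GGG_X$ to find $w'\in \GGG_X$ with $\dH(w,w')\leq g(n+2r)$, and note that changing a single symbol of $w$ can alter at most $2r+1$ consecutive symbols of $\Pi(w)$. Hence $\dH(u,\Pi(w'))\leq (2r+1)\,g(n+2r)$, and the function $g_Y(n):=(2r+1)\,g(n+2r)$ still satisfies $g_Y(n)/\log n\to 0$, since $\log(n+2r)\sim \log n$.

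The specification step is more subtle, and I expect it to be the main obstacle. Given $u_1,u_2\in \GGG_Y$ with chosen lifts $w_1,w_2\in \GGG_X$, applying specification of $\GGG_X$ to $w_1,w_2$ yields $v\in \LLL_X$ with $|v|\leq \tau$ so that $w_1 v w_2\in \GGG_X$, whence $\Pi(w_1 v w_2) = u_1 s u_2\in \GGG_Y$ with $|s| = |v|+2r\leq \tau+2r$. The difficulty is the suffix/prefix clause of the definition: the same $s$ must work for every $u_1'\in \GGG_Y$ that is a suffix of $u_1$ and every $u_2'\in \GGG_Y$ that is a prefix of $u_2$, but such a $u_1'$ may lie in $\GGG_Y$ only because $u_1' = \Pi(\hat w)$ for some $\hat w\in \GGG_X$ unrelated to the actual suffix of $w_1$, so $X$-specification applied at $(w_1,w_2)$ does not obviously yield the required inclusions. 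To resolve this I would refine the candidate $\GGG_Y$ by first passing to a subset $\tilde\GGG_X\subseteq \GGG_X$ whose elements have the property that sufficiently long suffixes and prefixes again lie in $\GGG_X$, checking that $\tilde\GGG_X$ still Hamming-approximates $\LLL_X$ and still has specification (with possibly larger $\tau$), and then setting $\GGG_Y := \Pi(\tilde\GGG_X)$; once elements of $\GGG_Y$ admit lifts whose suffixes and prefixes remain in the specification set, the uniform choice of $s$ goes through.

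Positive topological entropy of $Y$ is not automatic for arbitrary factor maps (a projection onto a fixed point kills all entropy), but in any setting of interest it can be taken as an implicit hypothesis on the factor and does not interfere with transferring the other two properties.
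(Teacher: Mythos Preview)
Your overall strategy---realize the factor map as a sliding block code via Curtis--Hedlund--Lyndon and push the specification set forward---is exactly what the paper does: its proof is a one-line citation of \cite[Lemma 2.12]{CTY}, which yields the explicit mistake function $\tilde g(n) = (4r+3)\,g(n+2r) + 4r$ for an $r$-block factor. Your Hamming-approachability argument is essentially correct and matches this up to constants.

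The specification step, however, is where your proposal remains a sketch rather than a proof. You correctly locate the obstacle: a $\GGG_Y$-suffix $u_1'$ of $u_1 = \Pi(w_1)$ need not equal $\Pi(\tilde w_1)$ for any $\GGG_X$-suffix $\tilde w_1$ of $w_1$, so the $X$-specification applied at $(w_1,w_2)$ gives no direct information about $u_1' s u_2'$. Your proposed remedy---pass to a subcollection $\tilde\GGG_X \subset \GGG_X$ whose long suffixes and prefixes remain in $\GGG_X$---is not obviously achievable while preserving Hamming approachability, and you supply no construction. The larger multiplicative constant $(4r+3)$ and the additive $+4r$ in the cited formula indicate that the argument in \cite{CTY} does more than the naive push-forward $\GGG_Y := \Pi(\GGG_X)$, presumably by adjusting words near their endpoints; as written, your outline does not close this gap.

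Your final remark on positive entropy is apt: read literally, the proposition asserts that every subshift factor of $X$ has positive topological entropy, which is false (project onto a fixed point). In the paper's applications this is harmless, but strictly the statement should restrict to factors with positive entropy, or else drop that clause from ``the hypotheses'' and handle it separately.
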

\begin{proof}
By the proof of \cite[Lemma 2.12]{CTY}, if $g(n)$ works for $X$, and $\tilde X$ is a subshift factor obtained via an $r$-block code, then $\tilde g(n) = (4r+3) g(n+2r) + 4r$ works for $\tilde X$.
\end{proof}

\begin{corollary}\label{cor:beta-factors}
Let $X$ be any subshift factor of a $\beta$-shift.  Then every H\"older potential on $X$ satisfies \eqref{eqn:Pbig}, and has a unique equilibrium state, which has exponential decay of correlations and the central limit theorem for H\"older observables.
\end{corollary}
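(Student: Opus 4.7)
The plan is to derive the hyperbolicity \eqref{eqn:Pbig} directly from Theorem \ref{thm:hyperbolic} and Proposition \ref{prop:factors}, and then invoke the general theory of equilibrium states for shifts with non-uniform specification to obtain uniqueness and the statistical conclusions.

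First, as noted in the paragraph preceding Proposition \ref{prop:factors}, every $\beta$-shift satisfies the hypotheses of Theorem \ref{thm:hyperbolic} with $g\equiv 1$. Any subshift factor $X$ is obtained by a sliding block code, and after replacing the $\beta$-shift by a higher-block recoding we may assume this code has a fixed block length $r\in\NN$. Proposition \ref{prop:factors} then produces $\tilde g(n)=(4r+3)\cdot 1+4r=8r+3$ that certifies $\tilde g$-Hamming approachability of $\LLL(X)$ by a set with specification. As $\tilde g$ is a constant, the hypothesis $\tilde g(n)/\log n\to 0$ is trivially satisfied, so Theorem \ref{thm:hyperbolic} applies and yields \eqref{eqn:Pbig} for every H\"older potential $\ph$ on $X$.

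Second, to upgrade hyperbolicity to uniqueness and statistical properties I would invoke the decomposition framework of \cite{CTY}. The key observation is that bounded Hamming approachability of $\LLL$ by a set $\GGG$ with specification is a very strong form of a tail decomposition $\LLL\subset \Cs\cdot \GGG\cdot \Cp$ with $|\Cs(n)|$ and $|\Cp(n)|$ of subexponential growth, since each word can be repaired by a uniformly bounded number of symbol changes and the resulting prefix/suffix collections are bounded in size. Together with the hyperbolic gap from Step~1, this framework supplies a unique equilibrium state $\mu_\ph$ of positive entropy. Quasi-compactness of the transfer operator on a suitable H\"older function space (as in \cite{mR83,gK84,BK90}) then yields exponential decay of correlations and the central limit theorem for H\"older observables in the standard way.

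The main obstacle I expect is not the hyperbolicity, which follows mechanically, but the translation step: one must verify that the $\tilde g$-Hamming approachability hypothesis of Theorem \ref{thm:hyperbolic} fits (or can be promoted to) the precise decomposition hypothesis invoked in the uniqueness and statistical results, and one must check the H\"older distortion estimates along $\GGG$-words that drive the transfer-operator (or Young-tower) analysis. Because $\tilde g$ is a constant, the Hamming error can be absorbed into a uniform H\"older loss, so the translation should require only a careful bookkeeping argument rather than essentially new ideas; the hyperbolic gap from Step~1 is what ultimately ensures that the inducing scheme or transfer operator is non-degenerate and admits the spectral gap needed for the statistical conclusions.
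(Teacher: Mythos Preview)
Your Step~1 is correct and is exactly what the paper does: Theorem~\ref{thm:hyperbolic} plus Proposition~\ref{prop:factors} gives \eqref{eqn:Pbig}.

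Your Step~2 contains a genuine gap. The claim that bounded Hamming approachability of $\LLL$ by $\GGG$ yields a decomposition $\LLL\subset\CCC^s\cdot\GGG\cdot\CCC^p$ with $\CCC^s,\CCC^p$ of bounded (or even subexponential) size is false in general: the $\tilde g$ symbol changes permitted by Hamming approachability can occur \emph{anywhere} in the word, including deep in the interior, so they cannot be absorbed into a bounded-length prefix and suffix. Concretely, if repairing $w\in\LLL_n$ requires changing the symbol at position $\lfloor n/2\rfloor$, no factorisation $w=p v s$ with $v\in\GGG$ and $|p|,|s|$ bounded is possible. Hamming approachability and a $\CCC^p\GGG\CCC^s$ decomposition are genuinely different structural hypotheses; neither implies the other. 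Consequently the ``careful bookkeeping'' you anticipate does not exist, and the transfer-operator references \cite{mR83,gK84,BK90} you cite are not in a position to be invoked.

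The paper avoids any such translation. For the uniqueness and statistical properties it simply cites \cite[Theorem 1.4, Example 1.5, and \S\S1.7--1.8]{spec-towers}, where $\beta$-shifts and their subshift factors are treated directly: $\beta$-shifts are known to admit a genuine $\CCC^p\GGG\CCC^s$ decomposition with $h(\CCC^p\cup\CCC^s)=0$ (this comes from the lexicographic structure, not from Hamming approachability), and this decomposition passes to factors. The hyperbolicity established in Step~1 then supplies the pressure-gap condition $P(\CCC^p\cup\CCC^s,\ph)<P(\ph)$ needed there (cf.\ the Remark following Theorem~\ref{thm:unique}), and the tower machinery in \cite{spec-towers} delivers exponential decay of correlations and the CLT.
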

\begin{proof}
Theorem \ref{thm:hyperbolic} and Proposition \ref{prop:factors} give \eqref{eqn:Pbig}; for the rest, see \cite[Theorem 1.4, Example 1.5, and \S\S1.7--1.8]{spec-towers}.
\end{proof}

\begin{remark}\label{rem:S-gap}
Another class of shift spaces studied in \cite{CTa,CTY} are the $S$-gap shifts, for which there is no function $g$ as in Theorem \ref{thm:hyperbolic}; the best that can be done in general is $g(n) \approx \sqrt{n}$, see \cite[\S5.1.2]{CTY}.  On the other hand, it was shown in \cite[(5.1)]{CTY} that every H\"older potential for these shifts is hyperbolic.  The corresponding question for their subshift factors remains open.
\end{remark}

\begin{remark}
Another condition that appears in the literature to guarantee hyperbolicity of H\"older potentials is the `local specification' condition of Hofbauer and Keller \cite[Theorem 3]{HK82}, which can be stated as follows.
Given $k\in \NN$, let $\FFF_k$ be the set of $w\in \LLL$ such that for every $v\in \LLL$, there is $u\in \LLL$ with $|u| \leq k$ such that $wuv\in \LLL$.  (Then $\LLL$ has specification iff there is $k$ such that $\FFF_k = \LLL$.)  The `local specification' property from \cite[Theorem 3]{HK82} is equivalent to: for every $x\in X$ and every infinite $J\subset \NN$, there is $k\in \NN$ and an infinite $J' \subset J$ such that $x_1 \cdots x_j \in \FFF_k$ for every $j\in J'$.

Another result for interval maps was given in \cite{LRL14}, which showed that for a class of smooth interval maps with critical points and some non-uniformly expanding properties, \eqref{eqn:Pbig} holds for every H\"older continuous potential (not just those that are H\"older in the natural coding).
\end{remark}

Beyond $\beta$-transformations, it is natural to study the class of interval maps given by $x\mapsto \alpha + \beta x$ for $\alpha\in (0,1)$, $\beta>1$.  The coding spaces for these maps can be represented in terms of a countable graph using the general theory of Hofbauer \cite{fH79}, but it is not clear what mistake function $g$ these shifts admit, and so Buzzi's conjecture remains open for this class.

In light of Remark \ref{rem:S-gap} above on $S$-gap shifts, and other results from \cite{CTY} in which $g(n)/n \to 0$ seems to be the relevant condition, it is natural to ask how sharp the sublogarithmic condition on $g$ is.  In fact, one cannot do much better, as the following family of examples shows.

\begin{theorem}\label{thm:context-free}
Let $f\colon \NN\to \NN$ be nondecreasing, and suppose that there is $n_1\in \NN$ such that $1\leq f(n) \leq n/2$ for all $n\geq n_1$.  Let $G = \{0^a 1^b \mid a,b \geq f(a+b)\}$, and let $X$ be the coded shift generated by $G$.  Then for $\ph = -\one_{[1]}$, the potentials $t\ph$ have $P(t\ph)\geq 0$ for all
 $t\in \RR$, and $t\mapsto P(t\ph)$ is non-increasing.  Writing
\begin{equation}\label{eqn:t0}
t_0 = \inf \{t \mid P(t\ph) = 0\} = \sup \{t \mid P(t\ph)>0\}
\end{equation}
for the first root of Bowen's equation (possibly $+\infty$), the following are true.
\begin{enumerate}[label=\upshape{(\roman{*})}]
\item For the function $g(n) = 2n_1 + 2\max(f(n),n_1)$, $\LLL=\LLL(X)$ is $g$-Hamming approachable by $\GGG = G^*$.
\item Given $t\geq 0$, the potential $t\ph$ is hyperbolic if and only if $t<t_0$.
\item If $0\leq t<t_0$, then there is a unique equilibrium state for $t\ph$, and it has positive entropy.
\item If $t>t_0$, then $\delta_0$ is the unique equilibrium state for $t\ph$.
\item\label{root} $t_0<\infty$ if and only if there exists $\gamma>0$ such that $\sum_{n\in \NN} \gamma^{f(n)} < \infty$.
\end{enumerate}
\end{theorem}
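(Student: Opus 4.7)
The plan is as follows. Since $\ph\leq 0$, the map $t\mapsto P(t\ph)$ is non-increasing. Because $0^\infty\in X$ is a fixed point of the shift, $\delta_0$ is invariant with zero entropy and $\int\ph\,d\delta_0=0$, giving $P(t\ph)\geq 0$. As $\delta_0$ is the only invariant measure with $\mu([1])=0$ (any all-zero sequence in $X$ must be $0^\infty$, and $h_{\delta_0}=0$), we have $\sup_\mu\int t\ph\,d\mu=0$ for $t\geq 0$, so $t\ph$ is hyperbolic iff $P(t\ph)>0$ iff $t<t_0$, which is (ii). A computation shows $t_0=\sup_{\mu:\,\mu([1])>0} h_\mu/\mu([1])$. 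For (iv), if $t>t_0$ then for every $\mu\neq\delta_0$ we have $\mu([1])>0$ and $h_\mu/\mu([1])\leq t_0<t$, hence $h_\mu+t\int\ph\,d\mu<0=P(t\ph)$, so $\delta_0$ is the unique equilibrium state.

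For (i), write $w\in\LLL_n$ as $w=Y_0 G_1\cdots G_m Y_1$ with $G_i\in G$ and $Y_0,Y_1$ a proper suffix and prefix of some $G$-word respectively. I modify $Y_0$; the case of $Y_1$ is symmetric. If $|Y_0|\geq n_1$, then $f(|Y_0|)\leq |Y_0|/2$, and $Y_0$ can be replaced by a single $G$-word of the same length by flipping at most $f(|Y_0|)\leq f(n)$ symbols (e.g., if $Y_0=1^{b'}$, replace it with $0^{f(b')}1^{b'-f(b')}$; if $Y_0=0^{a'}1^b$ and $a'<f(a'+b)$, flip $f(|Y_0|)-a'$ of the initial $1$'s to $0$). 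If $|Y_0|<n_1$, I modify $Y_0$ together with $G_1$ (and if necessary some adjacent $G_i$): flipping the $\leq n_1$ symbols of $Y_0$ and up to $f(L)\leq f(n)$ further symbols near the $0/1$ boundary produces a single $G$-word of length $L=|Y_0|+|G_1|$ (choosing a target $0^i 1^{L-i}$ with $i\in[f(L),L-f(L)]$ close to the current profile, so that both constraints $a,b\geq f(L)$ hold). Each end costs at most $n_1+\max(f(n),n_1)$, giving $d_H(w,w')\leq 2n_1+2\max(f(n),n_1)=g(n)$.

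For (iii), since $P(t\ph)>0$ and $t\int\ph\,d\mu\leq 0$, any equilibrium state $\mu$ satisfies $h_\mu = P(t\ph)-t\int\ph\,d\mu \geq P(t\ph)>0$, giving positive entropy. Existence follows from upper semicontinuity of $h_\mu$. For uniqueness, note that $G$ is uniquely decodable---boundaries in any $G^*$-word are exactly the $1\to 0$ transitions---so $X$ is the factor of a countable full shift on the alphabet $G$, and Sarig's thermodynamic formalism for countable Markov shifts produces a unique equilibrium state for the induced H\"older potential whenever the pressure is positive.

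For (v), unique decodability gives the identity $\sum_n\bigl(\sum_{w\in G^*_n}e^{-t|w|_1}\bigr)z^n=(1-\Psi(z,t))^{-1}$, where $\Psi(z,t)=\sum_{(a,b)\in G} z^{a+b}e^{-tb}$. A standard counting argument (accounting for subexponential boundary factors from $Y_0,Y_1$) identifies $P(t\ph)=-\log z_*(t)$, with $z_*(t)\in (0,1]$ the smallest root of $\Psi(\cdot,t)=1$, or $z_*(t)=1$ if no such root exists. Hence $P(t\ph)>0$ iff $\Psi(1,t)>1$. The two-sided estimate $e^{-tf(n)}\leq \sum_{b=f(n)}^{n-f(n)} e^{-tb}\leq e^{-tf(n)}/(1-e^{-t})$ shows $\Psi(1,t)<\infty \iff \sum_n\gamma^{f(n)}<\infty$ with $\gamma=e^{-t}$. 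If no such $\gamma$ works, $\Psi(1,t)=\infty>1$ for every $t$, so $t_0=\infty$; if some $\gamma$ works, dominated convergence gives $\Psi(1,t)\to 0$ as $t\to\infty$, so $\Psi(1,t)<1$ for $t$ large, hence $t_0<\infty$. The main obstacle is the case analysis in (i): the sub-case $|Y_0|<n_1$ requires carefully balancing flips in $Y_0$ with flips near the $0/1$ boundary of $G_1$ so that both parts of the constraint $a,b\geq f(a+b)$ hold on the merged block, while staying within the budget $n_1+\max(f(n),n_1)$ per end.
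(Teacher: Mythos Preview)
Your overall architecture matches the paper's: parts (ii) and (iv) are essentially identical, and your (v) is the same generating-function argument via $\Psi(z,t)=\sum_{w\in G}e^{\Phi(w)}z^{|w|}$ that the paper carries out (the paper makes your ``standard counting argument'' precise by showing that the prefix/suffix series $C^{\mathcal P}(x),C^{\mathcal S}(x)$ both converge for $0\le x<1$, so the full partition-sum series and the $G^*$ series have the same radius of convergence). For (i) your decomposition via the $G^*$-factorization of a superword is close in spirit, but the paper instead parses $w$ directly into its maximal $0^{a_i}1^{b_i}$ blocks and isolates a single lemma (if $w=u\,0^a1^b\,v$ with $|u|,|v|\le n_1$ and $|w|\ge n_1$, then some $G$-word lies within Hamming distance $n_1+\max(f(|w|),n_1)$); this cleanly handles both ends and absorbs your ``and if necessary some adjacent $G_i$'' hand-wave as well as the degenerate case $m=0$, which your sketch does not cover.

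The genuine gap is in (iii). The sentence ``Sarig's thermodynamic formalism \dots\ produces a unique equilibrium state \dots\ whenever the pressure is positive'' is not a theorem. To make this route work you would need to (a) verify that the induced potential on the loop graph over $G$ is positive recurrent, i.e.\ $\sum_n n\,\Lambda_n(G,t\ph)\,e^{-nP(t\ph)}<\infty$, and (b) argue separately that every equilibrium state on $X$ is carried by the image of the countable shift rather than by limit points such as $0^\infty$. Both are true here---(a) because $\Lambda_n(G,t\ph)^{1/n}\to 1$, so the series converges strictly inside the unit disk, and (b) because any boundary measure has zero entropy while an equilibrium state has entropy $\ge P(t\ph)>0$---but they must be stated and proved. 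The paper sidesteps all of this by invoking its cited uniqueness theorem for coded shifts with uniquely decipherable generator: one checks $\mathcal D(G)\subset\{0^a1^b:a,b\ge 0\}$, so $\#\mathcal D_n\le n+1$, $h(\mathcal D)=0$, and hence $P(\mathcal D,t\ph)\le \sup I_t=0<P(t\ph)$, which is the entire hypothesis.
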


\begin{remark}
The examples in Theorem \ref{thm:context-free} are modifications of the coded shift generated by $G = \{0^n 1^n : n\in \NN\}$, which was studied by Conrad \cite{sC}, who showed that for sufficiently large values of $t$, the potential $t\ph$ has the delta measure $\delta_0$ as its unique equilibrium state, and in particular is not hyperbolic.
\end{remark}


The last statement in Theorem \ref{thm:context-free} allows us to give a class of shifts for which there is a H\"older potential that is not hyperbolic.

\begin{corollary}\label{cor:context-free}
If $\liminf g(n)/\log(n) > 0$, then the conclusion of Theorem \ref{thm:hyperbolic} fails in the following sense: there is a shift $X$ with language $\LLL$ and a collection $\GGG\subset \LLL$ such that $\GGG^* \subset \GGG$ and $\LLL$ is $g$-Hamming approachable by $\GGG$, but there is a locally constant potential function with a delta measure as its unique equilibrium state.
\end{corollary}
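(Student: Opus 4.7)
The plan is to invoke Theorem~\ref{thm:context-free} with a carefully chosen nondecreasing $f$. Given $g$ with $\liminf_{n\to\infty} g(n)/\log(n) > 0$, I want to pick $f$ growing like a small constant times $\log n$: slow enough that the mistake function produced by part~(i) is dominated by $g$, yet fast enough that the summability condition in part~(v) still holds and hence $t_0 < \infty$. Then part~(iv) supplies the desired locally constant potential whose unique equilibrium state is a delta measure.

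Concretely, set $c := \liminf_{n\to\infty} g(n)/\log(n) > 0$, pick any $c_1 \in (0, c/2)$, and take $f(n) = \max(1, \lfloor c_1 \log n \rfloor)$. This is nondecreasing and satisfies $1 \leq f(n) \leq n/2$ for $n \geq n_1$ for some $n_1$. Apply Theorem~\ref{thm:context-free} with this $f$ to get the coded shift $X$ generated by $G = \{0^a 1^b : a, b \geq f(a+b)\}$, and set $\GGG := G^*$. The star operation is idempotent, so $\GGG^* = G^{**} = G^* = \GGG$, which in particular gives $\GGG^* \subset \GGG$. Part~(i) yields $g_0$-Hamming approachability of $\LLL$ by $\GGG$ with $g_0(n) = 2n_1 + 2\max(f(n), n_1) = 2c_1 \log n + O(1)$. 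Choosing any $c' \in (2c_1, c)$, one has $g(n) \geq c' \log n$ eventually, so $g_0(n) \leq g(n)$ for all sufficiently large $n$, and hence $\LLL$ is also $g$-Hamming approachable by $\GGG$.

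Next I apply part~(v) to secure $t_0 < \infty$: with $f(n) = c_1 \log n + O(1)$, the series $\sum_n \gamma^{f(n)}$ is comparable to $\sum_n n^{c_1 \log \gamma}$, which converges whenever $\gamma < e^{-1/c_1}$. Pick any $t > \max(t_0, 0)$. The potential $t\ph = -t\one_{[1]}$ depends only on the zeroth coordinate, so it is locally constant, and by part~(iv) its unique equilibrium state is the fixed-point delta measure $\delta_0$. Since $\ph \leq 0$ and $t > 0$, one has $\sup_\mu \int t\ph\,d\mu = \int t\ph\,d\delta_0 = 0 = P(t\ph)$, so $t\ph$ violates \eqref{eqn:Pbig} and the conclusion of Theorem~\ref{thm:hyperbolic} fails for this $X$ and $\GGG$.

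There is no substantive obstacle beyond bookkeeping: once Theorem~\ref{thm:context-free} is in hand, the whole statement reduces to balancing two asymptotic estimates, $g_0(n) \leq g(n)$ and $\sum \gamma^{f(n)} < \infty$. The logarithmic threshold in Theorem~\ref{thm:hyperbolic} corresponds precisely to the edge at which both constraints can simultaneously be met: approachability forces $f$ to be at most logarithmic (up to the factor $\tfrac12$ built into $g_0$), while summability forces $f$ to be at least logarithmic.
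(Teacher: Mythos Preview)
Your proof is correct and follows exactly the approach the paper intends: the corollary is stated without an explicit proof precisely because it is a direct application of Theorem~\ref{thm:context-free} with a logarithmic choice of $f$, and your argument carries out that application cleanly, balancing the approachability bound from part~(i) against the summability criterion in part~(v).
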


\begin{remark}
In fact, Theorem \ref{thm:context-free} shows that hyperbolicity can fail for some error functions $g$ with $\liminf g(n)/\log(n)=0$ and $\limsup g(n)/\log(n)>0$, as long as there is $\gamma>0$ such that $\sum_n \gamma^{g(n)} < \infty$.  This does not cover all functions $g$ with $\liminf=0$ and $\limsup>0$; it would be interesting to know if Theorem \ref{thm:hyperbolic} can be extended to include functions $g$ where $\limsup > 0$ but $\sum_n \gamma^{g(n)} = \infty$ for all $\gamma>0$.
\end{remark}

\subsection*{Acknowledgments} We are grateful to the anonymous referee for pointing out an error in the original version of \S\ref{sec:hamming-approachability}.

\section{Background definitions}

\subsection{Shift spaces}

Given a finite set $A$, let $\sigma\colon A^\NN\to A^\NN$ denote the left shift map.\footnote{
Our results all remain true for two-sided shifts ($\sigma\colon A^\mathbb{Z} \to A^\mathbb{Z}$).}  Equip $A^\NN$ with the product topology; equivalently, define a metric on $A$ by $d(x,y) = 2^{-\min\{n\in \NN \mid x_n \neq y_n\}}$.  A \emph{shift space over the alphabet $A$} is a closed $\sigma$-invariant subset $X \subset A^\NN$.

Write $A^* = \bigcup_{n=0}^\infty A^n$ for the collection of all finite words over $A$.
Given a shift space $X$, the \emph{language} of $X$ is
\[
\LLL = \LLL(X) = \{w\in A^* \mid x_1 \cdots x_n = w \text{ for some } x\in X \text{ and } n\in \NN \}.
\]
Given $\DDD\subset \LLL$, write $\DDD_n = \DDD \cap A^n$ for the set of all words of length $n$ in $\DDD$.  In particular, $\LLL_n$ denotes the set of all words of length $n$ in the language of $X$.  Given $w\in \LLL_n$, let $[w] = \{x\in X \mid x_1 \cdots x_n = w\}$ be the corresponding \emph{cylinder} in $X$.

\subsection{Thermodynamic formalism and equilibrium states}

Let $X$ be a shift space and $\LLL$ its language. Given a continuous function $\ph\colon X\to \RR$, which we call a \emph{potential}, consider for each $w\in \LLL_n$ the quantity
\[
\Phi(w) := \sup_{x\in [w]} S_n \ph(x),
\]
where $S_n\ph(x) = \sum_{k=0}^{n-1} \ph(\sigma^kx)$.  Given $\DDD \subset \LLL$, the $n$th \emph{partition sum} associated to $\DDD$ and $\ph$ is
\[
\Lambda_n(\DDD,\ph) := \sum_{w\in \DDD_n} e^{\Phi(w)}.
\]
The \emph{pressure} of $\DDD$ with respect to $\ph$ is
\[
P(\DDD,\ph) := \ulim_{n\to\infty} \frac 1n \log \Lambda_n(\DDD,\ph).
\]
In the specific case $\ph=0$, this reduces to the \emph{entropy} of $\DDD$:
\[
h(\DDD) := \ulim_{n\to\infty} \frac 1n \log \#\DDD_n.
\]
When $\DDD = \LLL(X)$, we write $P(X,\ph) = P(\LLL(X),\ph)$.
Let $\MMM_\sigma(X)$ denote the set of $\sigma$-invariant Borel probability measures on $X$.  The \emph{variational principle} \cite[Theorem 9.10]{pW82} says that
\[
P(X,\ph) = \sup \bigg\{ h_\mu(\sigma) + \int\ph\,d\mu \mid \mu\in \MMM_\sigma(X) \bigg\}.
\]
A measure 
achieving this supremum is called an \emph{equilibrium state}.  

Write $I(\ph) = \{\int\ph\,d\mu : \mu\in \MMM_\sigma(X)\}$.  Following \cite{IRRL12}, we call a potential function \emph{hyperbolic} if it satisfies \eqref{eqn:Pbig}; that is, if $P(X,\ph) > \sup 
I$.  Given $\eps>0$, there is $n\in \NN$ such that $\frac 1n S_n\ph(x) < \sup I + \eps$ for all $x\in X$; consequently, $\ph$ is hyperbolic if and only if there is $n\in \NN$ such that
\begin{equation}\label{eqn:hyperbolic}
P(X,\ph) > \sup_{x\in X} \frac 1n S_n \ph(x).
\end{equation}
Equivalently, one may observe that $\ph$ and $\frac 1n S_n\ph(x)$ are cohomologous,\footnote{Put $\xi(x) = \frac 1n\sum_{k=0}^{n-1} (n-k)\ph(\sigma_k x)$, then $\xi(x) - \xi(\sigma x) = \frac 1n S_n\ph(x) - \ph(x)$.} and so $\ph$ is hyperbolic if and only if there is a potential $\psi$ cohomologous to $\ph$ such that
\begin{equation}\label{eqn:hyperbolic-2}
P(X,\ph) = P(X,\psi) > \sup_{x\in X} \psi(x).
\end{equation}


\subsection{Specification, decompositions, and uniqueness}

Following the definition in \cite{CTY,spec-towers}, say that $\GGG \subset \LLL$ has \emph{specification} if there is $\tau>0$ such that for every $v,w\in \GGG$ there exists $u\in \LLL$ with length $|u|\leq \tau$ such that $vuw\in \GGG$.  This is a version of a condition that appeared in \cite{CTa,CTb} and generalises the classical specification property of Bowen \cite{rB74}, which corresponds roughly to this definition with $\GGG = \LLL$.

If $\GGG$ has specification with $\tau=0$, then we have $vw\in \GGG$ whenever $v,w\in \GGG$, and in this case we say that $\GGG$ has the \emph{free concatenation property}.  

When $\LLL$ has specification, it was proved by Bertrand \cite{aB88} that $\LLL$ contains a \emph{sychronising word}; that is, a word $s\in \LLL$ with the property that if $vs\in \LLL$ and $sw\in \LLL$, then $vsw\in \LLL$.  In this case the collection $\{sw : sws\in \LLL\}$ has the free concatenation property.  The following generalisation of this fact was proved in \cite[Proposition 7.3 and \S7.1.2]{spec-towers}.

\begin{proposition}\label{prop:get-free}
If $\GGG\subset \LLL$ has specification, then there is a collection $\FFF\subset \LLL$ and a number $N \in \NN$ such that 
\begin{enumerate}
\item $\FFF$ has the free concatenation property, and
\item given any $w\in \GGG$, there are $u,v\in \LLL$ with $|u|,|v| \leq N$ and $uwv\in \FFF$. 
\end{enumerate}
\end{proposition}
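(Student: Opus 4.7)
My plan is to adapt Bertrand's classical construction of synchronising words \cite{aB88} to the distinguished collection $\GGG$. The crux, and main obstacle, is to produce a word $s \in \GGG$ that is \emph{synchronising for $\GGG$}, meaning that whenever $v, w \in \LLL$ satisfy $vs \in \GGG$ and $sw \in \GGG$, one also has $vsw \in \GGG$. For the case $\GGG = \LLL$ with classical specification this is Bertrand's theorem, proved by a pigeonhole argument on follower sets. The same kind of argument should go through here: the set of possible connecting words of length $\leq \tau$ over a finite alphabet is finite, so an appropriate sequence of increasingly long elements of $\GGG$ must exhibit stable gluing behaviour, and a suitable factor of such a word will be synchronising. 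A mild additional step would use the specification property of $\GGG$ itself to arrange that $s \in \GGG$ (rather than merely in $\LLL$), by padding a synchronising word on both sides with elements of $\GGG$ and verifying that synchronisation is preserved.

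Granted such an $s$, I would set
\[
\FFF := \{sw \in \LLL : sws \in \GGG\}.
\]
To verify the free concatenation property, I would take $sw_1, sw_2 \in \FFF$, so that $sw_1 s \in \GGG$ and $sw_2 s \in \GGG$, and apply synchronisation to the pair $(sw_1, w_2 s)$: since $(sw_1) \cdot s = sw_1 s \in \GGG$ and $s \cdot (w_2 s) = sw_2 s \in \GGG$, synchronisation produces $sw_1 \cdot s \cdot w_2 s = s(w_1 s w_2) s \in \GGG$, whence $s(w_1 s w_2) \in \FFF$. Since $sw_1 s w_2 = (sw_1)(sw_2)$, the concatenation of two elements of $\FFF$ again lies in $\FFF$.

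For property (2), given $w \in \GGG$, I would apply the specification property of $\GGG$ twice: first to the pair $(s, w)$ to obtain $u_1 \in \LLL$ with $|u_1| \leq \tau$ and $su_1 w \in \GGG$, then to $(su_1 w, s)$ to obtain $u_2 \in \LLL$ with $|u_2| \leq \tau$ and $s u_1 w u_2 s \in \GGG$. The last relation says $s u_1 w u_2 \in \FFF$, so choosing the pair $u := s u_1$ and $v := u_2$ yields $uwv \in \FFF$ with $|u| \leq |s| + \tau$ and $|v| \leq \tau$. Taking $N := |s| + \tau$ completes the proof; as noted, the only non-routine step is the existence of the synchronising word $s \in \GGG$, which I expect to be the main technical work.
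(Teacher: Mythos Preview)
Your proposal is correct and follows essentially the same route as the paper. The paper does not prove this proposition in place but cites \cite[Proposition 7.3 and \S7.1.2]{spec-towers}; in the sentence immediately preceding the statement it describes precisely your strategy: produce a word $s$ that is synchronising for $\GGG$ (generalising Bertrand's result from $\LLL$ to $\GGG$) and set $\FFF=\{sw:sws\in\GGG\}$. Your verifications of free concatenation and of property~(2), given such an $s$, are correct, and you rightly identify the existence of the $\GGG$-synchronising word as the only substantive step---this is exactly the content of the cited proposition in \cite{spec-towers}.
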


See \cite{spec-towers} for a more explicit description of the collection $\FFF$; all we will need are the properties listed above.  Writing $d = \gcd\{ |v| : v\in \FFF \}$, it follows from the free concatenation property that we can choose $N\in \NN$ large enough that $\FFF_n\neq\emptyset$ whenever $n\geq N$ is a multiple of $d$.  Thus Proposition \ref{prop:get-free} has the following consequence.

\begin{corollary}\label{cor:get-free}
Given $\GGG,\FFF$ as in Proposition \ref{prop:get-free} and $d$ as in the previous paragraph, there is $N\in \NN$ such that given any $w\in \GGG$ and any $n\geq |w| + 2N$ that is a multiple of $d$, there are $u,v\in \LLL$ with $|u|< N$, $uwv\in \FFF$, and $|uwv| = n$.
\end{corollary}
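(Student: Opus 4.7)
The plan is to apply Proposition~\ref{prop:get-free} once to produce an initial word in $\FFF$ containing $w$, and then to use the free concatenation property of $\FFF$ together with the fact that $\FFF$ eventually has words of every length divisible by $d$ to pad the result to the desired length $n$.

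Concretely, let $N_0$ denote the constant supplied by Proposition~\ref{prop:get-free}, and let $N_1$ be the threshold (from the paragraph immediately preceding the corollary) with the property that $\FFF_m \neq \emptyset$ for every $m \geq N_1$ which is a multiple of $d$. I would then set $N := N_0 + N_1 + 1$. Given $w \in \GGG$, Proposition~\ref{prop:get-free} furnishes $u_0, v_0 \in \LLL$ with $|u_0|, |v_0| \leq N_0$ and $u_0 w v_0 \in \FFF$. The key observation is that, since every element of $\FFF$ has length divisible by $d$ (by definition of $d$), the length $|u_0 w v_0|$ is itself a multiple of $d$.

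Now fix $n \geq |w| + 2N$ which is a multiple of $d$, and set $\ell := n - |u_0 w v_0|$. Then $\ell$ is a non-negative multiple of $d$, and
\[
\ell \;\geq\; (|w| + 2N) - (|w| + 2N_0) \;=\; 2(N - N_0) \;>\; N_1,
\]
so by the choice of $N_1$ there exists some $z \in \FFF_\ell$. By the free concatenation property of $\FFF$, the concatenation $u_0 w v_0 z$ lies in $\FFF$. Setting $u := u_0$ and $v := v_0 z$, we get $|u| \leq N_0 < N$, $uwv = u_0 w v_0 z \in \FFF$, and $|uwv| = |u_0 w v_0| + \ell = n$, as required. (In the degenerate case $\ell = 0$ one simply takes $z$ to be empty, but the estimate above ensures $\ell > N_1 \geq 0$, so we can assume $z$ is nonempty.)

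There is no real obstacle here; the only subtlety worth flagging is the divisibility check that $d \mid |u_0 w v_0|$, which is exactly what makes $\ell$ itself divisible by $d$ and thus allows us to invoke the non-emptiness of $\FFF_\ell$. The rest is bookkeeping: choose $N$ large enough to absorb both $N_0$ (the bound from Proposition~\ref{prop:get-free}) and $N_1$ (the threshold beyond which $\FFF$ has words of every admissible length).
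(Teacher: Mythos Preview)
Your argument is correct and follows essentially the same route as the paper's proof: apply Proposition~\ref{prop:get-free} once to get $u_0 w v_0 \in \FFF$, observe that $d$ divides $|u_0 w v_0|$, and then pad on the right via free concatenation with some $z \in \FFF$ of the residual length. The only cosmetic difference is your explicit choice $N = N_0 + N_1 + 1$ versus the paper's requirement that $N > N_0$ and $N - 2N_0 \geq N_1$; both choices make the length estimate $\ell \geq N_1$ go through.
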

\begin{proof}
Let $N_0$ be given by Proposition \ref{prop:get-free}, and $N_1$ by the previous paragraph; then choose $N$ large enough that $N>N_0$ and $N-2N_0 \geq N_1$.  Given any $w\in \GGG$, Proposition \ref{prop:get-free} gives $u,v'\in \LLL$ with $|u|, |v'| \leq N_0 < N$ such that $uwv' \in \FFF$.  Let $n \geq |w| + 2N$ be a multiple of $d$.  By definition, $|uwv'|$ is a multiple of $d$, and thus $n-|uwv'|$ is also a multiple of $d$.  Moreover,
\[
n - |uwv'| \geq (|w| + N) - |w| - 2N_0 \geq N_1,
\]
so there is $v''\in \FFF$ with $|v''|=n-|uwv'|$, hence $uwv'v''\in \FFF$ and $|uwv'v''| = n$.
\end{proof}


If $\GGG$ is `large enough', then specification for $\GGG$ can be used to deduce uniqueness of the equilibrium state.  More precisely, a \emph{decomposition} of $\LLL$ is a choice of $\CCC^p, \GGG, \CCC^s \subset \LLL$ such that for every $w\in \LLL$ there are $u^p\in \CCC^p$, $v\in \GGG$, and $u^s\in \CCC^s$ with $w = u^p v u^s$.

\begin{theorem}[\cite{spec-towers}, Theorem 1.1]\label{thm:unique}
Suppose that $\GGG$ has specification and is closed under intersections and unions in the following sense: if $u,v,w\in \LLL$ are such that $uvw\in \LLL$, $uv\in \GGG$, and $vw\in \GGG$, then we have $v, uvw\in \GGG$.  Let $\ph$ be a H\"older potential and $\CCC^p \GGG \CCC^s$ a decomposition of $\LLL$ with $P(\CCC^p \cup \CCC^s,\ph) < P(\ph)$.  Then $\ph$ has a unique equilibrium state $\mu$, and $\mu$ has exponential decay of correlations (up to a finite period) and satisfies the central limit theorem for H\"older observables.
\end{theorem}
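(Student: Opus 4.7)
The plan is to build a countable-state Markov extension (a tower) over a free-concatenation sublanguage of $\LLL$, develop the thermodynamics there via Sarig's Ruelle--Perron--Frobenius theory for countable Markov shifts, and push the results back to $X$.

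First, I would replace $\GGG$ by the collection $\FFF$ from Proposition \ref{prop:get-free}, which has free concatenation. Because the bounded-length prefix $u$ and suffix $v$ that convert $w \in \GGG$ into $uwv \in \FFF$ can be absorbed into $\CCC^p$ and $\CCC^s$ without changing asymptotic exponential growth rates, the given decomposition $\CCC^p \GGG \CCC^s$ of $\LLL$ yields a modified decomposition of the form $\tilde\CCC^p \FFF^* \tilde\CCC^s$ whose ``bad'' parts still satisfy $P(\tilde\CCC^p \cup \tilde\CCC^s, \ph) < P(\ph)$. The closure of $\GGG$ under intersections and unions is used precisely here to force the $\FFF$-factorisation of a long word to be essentially canonical, and in particular to prevent overcounting when we estimate partition sums on the tower.

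Next, the free concatenation property lets me model $\FFF^*$ as a countable-alphabet full shift (one ``letter'' per word of $\FFF$), giving a tower $(\tilde X, \tilde\sigma)$ and a natural projection $\pi \colon \tilde X \to X$. I would lift $\ph$ to $\tilde\ph = \ph \circ \pi$ and show, using the pressure gap, that the induced potential on the base of the tower has summable Gurevich partition sums; this is precisely the positive-recurrence hypothesis in Sarig's Ruelle--Perron--Frobenius theorem, which then provides an RPF eigenmeasure and a unique equilibrium state $\tilde\mu$ on $\tilde X$. Pushing forward gives an equilibrium state $\mu = \pi_* \tilde\mu$ for $\ph$ on $X$; uniqueness on $X$ follows by showing that any ergodic equilibrium state must be carried by $\pi(\tilde X)$, since the pressure gap forbids an ergodic equilibrium state from spending a positive fraction of time inside the bad blocks $\tilde\CCC^p \cup \tilde\CCC^s$.

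The statistical properties of $\mu$ then come from a spectral-gap analysis of the transfer operator on a space of locally H\"older functions on $\tilde X$. I expect the main technical obstacle to be verifying the summability condition underlying positive recurrence, since the qualitative assumption $P(\tilde\CCC^p \cup \tilde\CCC^s, \ph) < P(\ph)$ must be upgraded to a quantitative geometric bound on the partition sums of the first return to the base. The ``up to a finite period'' qualifier in the conclusion corresponds to the period $d$ appearing in Corollary \ref{cor:get-free}: on each of the $d$ cyclic classes the transfer operator has a spectral gap, and this yields exponential decay of correlations and the central limit theorem for H\"older observables in the standard way.
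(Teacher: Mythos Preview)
The paper does not prove this theorem; it is quoted verbatim as \cite[Theorem 1.1]{spec-towers} and used as a black box, so there is no in-paper proof to compare your proposal against.

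That said, your outline is broadly faithful to the strategy of the cited reference: one does pass from $\GGG$ to a free-concatenation collection $\FFF$ (as in Proposition~\ref{prop:get-free}), build a countable-state tower over $\FFF$, lift $\ph$, use the pressure gap $P(\CCC^p\cup\CCC^s,\ph)<P(\ph)$ to obtain positive recurrence of the induced potential, apply Ruelle--Perron--Frobenius/spectral-gap machinery on the tower to get a unique equilibrium state with exponential decay of correlations and CLT, and then push forward by the coding map $\pi$. The finite period $d$ from Corollary~\ref{cor:get-free} is indeed the source of the ``up to a finite period'' caveat. One point where your sketch is slightly imprecise: the closure of $\GGG$ under intersections and unions is used not so much to make the $\FFF$-factorisation canonical as to guarantee that words in $\GGG$ can be parsed with $\FFF$-blocks in a way that controls the \emph{lengths} of the residual prefix/suffix pieces uniformly, which is what allows the bounded-length absorption into $\tilde\CCC^p,\tilde\CCC^s$ without destroying the pressure gap. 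You also understate the work involved in the uniqueness step on $X$: one must show $\pi$ is finite-to-one $\mu$-a.e.\ (so entropy is preserved) and that every ergodic equilibrium state on $X$ lifts, both of which require the decomposition hypotheses in a nontrivial way. But as a high-level plan for the argument in \cite{spec-towers}, your proposal is on target.
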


One can also use the results of \cite{CTb} to deduce uniqueness (but not the statistical properties) under extremely similar hypotheses.

\begin{remark}
For $\beta$-shifts and their factors, one can find a decomposition with $h(\CCC^p \cup \CCC^s) = 0$, and then the pressure gap condition in Theorem \ref{thm:unique} can be verified by proving hyperbolicity of the potential function, since an easy argument shows that $P(\DDD,\ph) \leq h(\DDD) + \sup_\mu \int\ph\,d\mu$ for every $\DDD \subset \LLL$.
\end{remark}

\subsection{Hamming approachability and asymptotic estimates}

Given a function $g\colon \NN\to \NN$, we say that $\LLL$ is \emph{$g$-Hamming approachable} by $\GGG \subset \LLL$ if there is $n_0\in \NN$ such that for every $n\geq n_0$ and $w\in \LLL_n$, there is $v\in \GGG_n$ with
\begin{equation}\label{eqn:dH}
\dH(v,w) :=
\# \{1\leq i\leq |w| :  v_i\neq w_i \} \leq g(|w|).
\end{equation}
This follows \cite[Definition 2.10]{CTY}, with the difference that we include the function $g$ in the notation, and will ultimately require that $g$ be sublogarithmic, not just sublinear.  We assume without loss of generality that $g$ is nondecreasing.

We will also need to use the fact that for any $k\leq m\in \NN$ and any $w\in \LLL_m$, we have
\begin{equation}\label{eqn:Ham-ball}
\#\{v\in \LLL_m : \dH(v,w) \leq k \} \leq \binom{m}{k} (\#A)^k.
\end{equation}
This becomes more useful with an estimate for $\binom{m}{k}$.  Recall from Stirling's formula that $\log(n!) = n\log n - n + O(\log n)$, 
and thus
\begin{align*}
\log\binom{m}{k} &= (m\log m - m) - (k\log k - k) \\
&\qquad - ((m-k)\log(m-k) - (m-k)) + O(\log m) \\
&= k\log\frac{m}{k} + (m-k)\log\frac{m}{m-k} + O(\log m).
\end{align*}
Writing $h(t) = -t\log t - (1-t)\log(1-t)$ for the bipartite entropy function, this gives
\begin{equation}\label{eqn:binom}
\log\binom{m}{k} = h\Big(\frac{k}{m}\Big) m + O(\log m),
\end{equation}
and so there is a constant $Q$ such that \eqref{eqn:Ham-ball} gives
\begin{equation}\label{eqn:Ham-ball-2}
\#\{v\in \LLL_m : \dH(v,w) \leq k \} \leq 
e^{m h(k/m)} m^Q (\#A)^k.
\end{equation}

\begin{lemma}\label{lem:Ham-diam}
Suppose $\DDD \subset \LLL$ has $h(\DDD)>0$,
and let $\beta>0$ be small enough that $h(\beta) + \beta\log(\#A) < h(\DDD)$.
Then for every $N\in \NN$ there are arbitrarily large $m\in \NN$ with the following property:
given any $w_1,\dots, w_N \in \DDD_m$, there is $v\in \DDD_m$ with $\dH(v,w_i) > \beta m$ for all $1\leq i\leq N$.
\end{lemma}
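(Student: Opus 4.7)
The plan is a straightforward volume/pigeonhole argument comparing the size of $\DDD_m$ with the total volume of $N$ Hamming balls of radius $\beta m$.

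First, I would invoke the bound \eqref{eqn:Ham-ball-2} to control the size of a single Hamming ball: for each $w_i \in \DDD_m$, the number of words in $\LLL_m$ (and hence in $\DDD_m$) within Hamming distance $\lfloor \beta m \rfloor$ of $w_i$ is at most
\[
e^{m h(\beta)} \, m^Q \, (\#A)^{\beta m} = m^Q \, e^{m(h(\beta) + \beta \log \#A)}.
\]
Summing over the $N$ centres, the union of these balls contains at most $N \, m^Q \, e^{m(h(\beta) + \beta \log \#A)}$ words. Here I am using that $h$ is increasing on $[0,1/2]$, so we may assume $\beta < 1/2$ and then $h(k/m) \leq h(\beta)$ for $k \leq \beta m$; one must also absorb the harmless issue of $\beta m$ not being an integer into the constant $Q$.

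Next, I would use the hypothesis $h(\DDD) > 0$ together with the definition of entropy as a $\ulim$: there are arbitrarily large $m\in \NN$ with
\[
\#\DDD_m \geq e^{m(h(\DDD)-\eps)},
\]
where $\eps>0$ is chosen small enough that $h(\DDD)-\eps > h(\beta) + \beta \log \#A$; this is possible precisely because of the hypothesis on $\beta$.

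Combining the two estimates, for such $m$ we get
\[
\#\DDD_m \geq e^{m(h(\DDD)-\eps)} \gg N \, m^Q \, e^{m(h(\beta) + \beta \log \#A)},
\]
since the exponential gap swallows the polynomial factor $m^Q$ and the constant $N$ as $m\to\infty$. Thus for all sufficiently large such $m$, the union of the $N$ Hamming balls cannot cover $\DDD_m$, which yields the desired $v\in \DDD_m$ with $\dH(v,w_i) > \beta m$ for all $i$. There is no real obstacle here; the only subtlety is keeping track that the required $m$ come from the $\ulim$ defining $h(\DDD)$, which is exactly why the conclusion asserts only \emph{arbitrarily large} $m$ rather than \emph{all sufficiently large} $m$.
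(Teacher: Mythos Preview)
Your argument is correct and is essentially identical to the paper's own proof: both bound the union of $N$ Hamming balls by $N m^Q e^{m(h(\beta)+\beta\log\#A)}$ via \eqref{eqn:Ham-ball-2}, then observe that infinitely many $m$ satisfy $\#\DDD_m \geq e^{m\eta}$ for some $\eta$ strictly between $h(\beta)+\beta\log\#A$ and $h(\DDD)$. Your remark about the $\ulim$ explaining why only ``arbitrarily large $m$'' is claimed is exactly the point.
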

\begin{proof}
Choose $\eta,\xi>0$ such that $h(\beta) + \beta\log(\#A) + \xi < \eta < h(\DDD)$.  
Given $m\in \NN$ and $w_1,\dots, w_N\in \DDD_m$, \eqref{eqn:Ham-ball-2} gives
\[
\# \bigcup_{i=1}^N\{v\in \LLL_m : \dH(v,w_i) \leq \beta m \} \leq Ne^{mh(\beta)} m^Q (\#A)^{\beta m}
< N m^Q e^{(\eta - \xi)m}.
\]
The right-hand side is
$<\#\DDD_m$ whenever $N m^Q < e^{m\xi}$ and $\#\DDD_m \geq e^{m\eta}$; this happens infinitely often.
\end{proof}

\subsection{Coded systems}

Given a finite alphabet $A$ and a collection of words $G \subset A^*$, write $G^*$ for the set of all finite concatenations of words in $G$.  The \emph{coded shift} generated by $G$ is the subshift $X$ over the alphabet $A$ whose language consists of all subwords of elements of $G^*$.  We refer to $G$ as a \emph{generating set} for $X$.  The generating set is said to be \emph{uniquely decipherable} if whenever $u^1 u^2 \cdots u^m = v^1 v^2 \cdots v^n$ with $u^i, v^j \in G$, we have $m=n$ and $u^j = v^j$ for all $j$ \cite[Definition 8.1.21]{LM95}.

\begin{theorem}\cite[Theorem 1.8]{spec-towers}\label{thm:coded}
Let $X$ be a coded shift on a finite alphabet and $\ph$ a H\"older potential on $X$.  If $X$ has a uniquely decipherable generating set $G$ such that $\DDD = \DDD(G) := \{w\in \LLL : w$ is a subword of some $g\in G\}$ satisfies $P(\DDD,\ph) < P(\ph)$, then $\ph$ has a unique equlibrium state $\mu$, and $\mu$ has exponential decay of correlations (up to a finite period) and satisfies the central limit theorem for H\"older observables.
\end{theorem}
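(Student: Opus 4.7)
Plan: The strategy is to deduce this from Theorem \ref{thm:unique} by constructing a suitable decomposition $\LLL = \CCC^p \GGG \CCC^s$. For $\CCC^p$ and $\CCC^s$ one takes proper suffixes and proper prefixes of elements of $G$, respectively; since these are subwords of single generators they lie in $\DDD$, so the hypothesis $P(\DDD,\ph)<P(\ph)$ immediately gives the pressure-gap condition $P(\CCC^p\cup \CCC^s,\ph)<P(\ph)$ required by Theorem \ref{thm:unique}.

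The delicate point is defining $\GGG$ so that it (i) has specification, (ii) is closed under intersections and unions, and (iii) admits every $w\in \LLL$ as $u^p v u^s$ with $v\in \GGG$. The naive choice $\GGG = G^*$ satisfies (i) with $\tau=0$ (free concatenation) but fails (ii): with $G=\{ab,bcd\}$, the choice $u=a$, $v=b$, $w=cd$ gives $uv,vw\in G^*$ while $v=b$ and $uvw=abcd$ are not. To repair this I would use unique decipherability to equip each word of $G^*$ with its canonical factorization and define $\GGG$ to be the collection of words occurring in some $x \in X$ strictly between two positions at which this canonical factorization places a generator boundary. Unique decipherability is precisely what makes the cut-point data intrinsic to $w$, independent of the ambient orbit $x$. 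From the definition $\GGG$ is closed under concatenation (giving specification with $\tau=0$), and the intersection/union closure then follows because the endpoints of $v$ must simultaneously be cut points of $uv$ and of $vw$, forcing $v$ itself to span a consecutive block of generators.

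With $\GGG,\CCC^p,\CCC^s$ in hand, every $w\in \LLL$ --- as a subword of some $g^1\cdots g^n\in G^*$ --- splits naturally into a trailing segment of one generator, a run of complete generators, and an initial segment of another, yielding the required decomposition. Theorem \ref{thm:unique} then delivers the unique equilibrium state together with exponential decay of correlations (up to a finite period) and the central limit theorem for H\"older observables.

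The principal obstacle is verifying the intersection/union closure of $\GGG$ cleanly. This is purely combinatorial but requires handling boundary cases where a generator crosses the interface between $u$ and $v$ (or $v$ and $w$) carefully, and is the step where unique decipherability is essential; without it, the cut-point construction collapses and the whole reduction to Theorem \ref{thm:unique} breaks down.
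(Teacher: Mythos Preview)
The paper does not prove this theorem; it is quoted verbatim from \cite[Theorem~1.8]{spec-towers} and used as a black box. So there is no ``paper's own proof'' to compare against here. That said, your reduction to Theorem~\ref{thm:unique} has a genuine gap at exactly the point you flag as the principal obstacle.

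Your collection $\GGG$, as you describe it (words occurring in some $x$ between two generator boundaries of its canonical factorization), is simply $G^*$: if $w$ lies between two cut points of some concatenation then $w$ is itself a concatenation of consecutive generators, and conversely every element of $G^*$ occurs in itself with its endpoints at cut points. The closure condition of Theorem~\ref{thm:unique} then fails even for uniquely decipherable $G$. Take $G=\{ab,ba\}$ over the alphabet $\{a,b\}$; this is uniquely decipherable since every generator has length~$2$. With $u=a$, $v=b$, $w=a$ we have $uv=ab\in G^*$, $vw=ba\in G^*$, and $uvw=aba\in\LLL$ (it is a subword of $abab=ab\cdot ab$), yet $v=b\notin G^*$ and $uvw=aba\notin G^*$. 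Your assertion that ``the endpoints of $v$ must simultaneously be cut points of $uv$ and of $vw$'' is precisely what fails: knowing $uv\in G^*$ tells you nothing about whether the position $|u|$ is a cut point of the unique factorization of $uv$. Unique decipherability makes the factorization of a given word in $G^*$ intrinsic, but it does \emph{not} make cut points intrinsic to arbitrary subwords, which is what your argument would need.

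For context, the proof in \cite{spec-towers} does not proceed by verifying the hypotheses of Theorem~\ref{thm:unique}. Instead it exploits unique decipherability to build a tower (equivalently, a countable-state Markov structure with base the generator boundaries and return times the generator lengths) and obtains uniqueness and the statistical properties from the thermodynamic formalism for that tower; the pressure gap $P(\DDD,\ph)<P(\ph)$ is what guarantees the relevant series converge. So the coded-shift theorem and the decomposition theorem in \cite{spec-towers} are parallel consequences of a common tower machinery rather than one being a corollary of the other.
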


\section{Proof of Theorem \ref{thm:hyperbolic}}

In \S\ref{sec:prelim} we establish some preliminary results that are needed in order to describe precisely (in \S\ref{sec:generate}) the mechanism by which we generate entropy.

\subsection{Preliminaries for the proof}\label{sec:prelim}

We start with the following consequence of Corollary \ref{cor:get-free}.

\begin{lemma}\label{lem:get-free}
	Under the hypotheses of Theorem \ref{thm:hyperbolic}, there are $N\in \NN$ and $\FFF\subset \LLL$ with the free concatenation property such that writing $d = \gcd \{ |v| : v\in \FFF\}$, the following is true: for every $w\in \LLL$ such that $|w|\geq 2N$ and $|w|$ is a multiple of $d$, there is some $w'\in \FFF$ such that $|w|=|w'|$ and
	\begin{equation}\label{eqn:dHww'}
	\dH(w_{[1,|w|-i]}, w'_{(i,|w'|]}) \leq g(|w|) + 2N \text{ for some } 0\leq i\leq N-1.
	\end{equation}
\end{lemma}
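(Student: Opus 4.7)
The plan is to combine Corollary \ref{cor:get-free} with $g$-Hamming approachability. First, apply Proposition \ref{prop:get-free} and Corollary \ref{cor:get-free} to $\GGG$ (which has specification by hypothesis) to obtain $\FFF \subset \LLL$ with the free concatenation property, together with a constant $N_0 \in \NN$ and the period $d = \gcd\{|v| : v \in \FFF\}$. Let $n_0$ be the threshold in the definition of $g$-Hamming approachability of $\LLL$ by $\GGG$, and set $N = N_0 + \lceil n_0/2 \rceil$, so that $N \geq N_0$ and $2N - 2N_0 \geq n_0$.

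Given $w \in \LLL$ with $n := |w| \geq 2N$ and $d \mid n$, the strategy is to truncate the last $k := 2N_0$ symbols of $w$, approximate the truncation by a word in $\GGG$, then re-extend it to length $n$ inside $\FFF$. Precisely, the prefix $w_{[1, n-k]} \in \LLL$ has length $n - k \geq 2N - 2N_0 \geq n_0$, so Hamming approachability provides $\tilde w \in \GGG_{n-k}$ with $\dH(\tilde w, w_{[1, n-k]}) \leq g(n - k) \leq g(n)$. Since $n \geq |\tilde w| + 2N_0$ and $d \mid n$, Corollary \ref{cor:get-free} supplies $u, v \in \LLL$ with $|u| < N_0$, $|u \tilde w v| = n$, and $u \tilde w v \in \FFF$. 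Set $w' := u \tilde w v$ and $i := |u|$, so that $i \in \{0, 1, \dots, N-1\}$ and $|v| = k - i$.

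It remains to estimate the Hamming distance. The words $w'_{(i, n]} = \tilde w v$ and $w_{[1, n-i]}$ both have length $n - i$. Splitting at position $n - k$, the first $n - k$ positions compare $\tilde w$ with $w_{[1, n-k]}$, contributing at most $g(n)$ disagreements; the remaining $k - i$ positions compare $v$ with $w_{(n-k, n-i]}$, contributing at most $k - i \leq k = 2N_0 \leq 2N$ disagreements. Summing yields the bound $g(n) + 2N$ demanded by \eqref{eqn:dHww'}.

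The main thing to watch is the coordination of the constants $N_0$ (from Corollary \ref{cor:get-free}), $n_0$ (from Hamming approachability), and $d$ (the period) so that the hypotheses of the corollary and the Hamming estimate can be applied simultaneously to the same $w$; once $N$ is chosen accordingly, the rest is bookkeeping and there is no deeper obstacle.
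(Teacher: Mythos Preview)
Your proof is correct and follows essentially the same route as the paper's: truncate the tail of $w$, Hamming-approximate the truncation by a word in $\GGG$, then use Corollary~\ref{cor:get-free} to sandwich that word between short $u,v$ to land in $\FFF$ at the right length, and finally split the Hamming comparison into the approximated block and the short tail. The only difference is cosmetic: the paper truncates by $2N$ (with $N$ taken directly from Corollary~\ref{cor:get-free}) and implicitly assumes the truncated word is long enough for Hamming approachability, whereas you truncate by $2N_0$ and explicitly enlarge $N$ to $N_0 + \lceil n_0/2\rceil$ to guarantee the $n_0$ threshold is met; your bookkeeping is in fact slightly cleaner on this point.
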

\begin{proof}
Let $\FFF$ be as in Proposition \ref{prop:get-free} and $N$ as in Corollary \ref{cor:get-free}.  Then $x=w_{[1,|w|-2N]}$ has $y\in \GGG_{|w|-2N}$ such that $\dH(x,y) \leq g(|w|-2N) \leq g(|w|)$, where we use the fact that $g$ is nondecreasing.
Corollary \ref{cor:get-free} gives $u,v\in \LLL$ such that $|u| < N$, $uyv\in \FFF$ and $|uyv| = |w|$.  Let $w' = uyv$ and $i = |u|$; then writing $w=xzz'$ where $|z'| = i$, we have
\begin{align*}
\dH(w_{[1,|w|-i]}, w'_{(i,|w'|]}) &= \dH(xz,yv) = \dH(x,y) + \dH(z,v) \\
&\leq g(|w|) + |z| \leq g(|w|) + 2N.\qedhere
\end{align*}
\end{proof}

Consider the map  $\LLL_n \to \FFF_n$ given by $w\mapsto w'$ as in Lemma \ref{lem:get-free}.  By \eqref{eqn:Ham-ball-2}, the multiplicity of this map is at most $N e^{nh\big(\frac{g(n)+2N}{n-N}\big)} n^Q (\#A)^{g(n)+2N}$.  Writing $c_n$ for this quantity we observe that $\#\FFF_n \geq (\#\LLL_n) / c_n$ whenever $n$ is a multiple of $d$, and that $\lim_{n\to\infty} \frac 1n \log c_n = 0$, so $h(\FFF) = h(\LLL) = \htop(X) > 0$. 
Thus we can take $\beta>0$ small enough that $h(\beta) + \beta\log(\#A) < h(\FFF)$, and fix some $m\geq \max(3N, n_0)$ such that the conclusion of Lemma \ref{lem:Ham-diam} holds, where $n_0$ is as in the paragraph preceding \eqref{eqn:dH}.  Note that $m$ must be a multiple of $d = \gcd\{|v|:v\in \FFF\}$.

Now we fix several more parameters that will be used in the proof.  First we will find $V>0$ that controls $|\Phi(v)-\Phi(w)|$ in terms of $\dH(v,w)$; then we will choose $\gamma>0$ small relative to $m,V$; then we choose a large $L>0$ that helps us control $\sum_i g(n_i)$; and finally we will choose $\delta>0$ small enough that a certain entropy estimate later on is positive.

Let $\alpha>0$ be the H\"older exponent of $\ph$, and write $|\ph|_\alpha = \sup_{x\neq y} \frac{|\ph(x)-\ph(y)|}{d(x,y)^\alpha}$.  Then for every $n\in \NN$, $w\in \LLL_n$, and $x,y\in [w]$, we have
\[
|S_n\ph(x) - S_n\ph(y)| \leq \sum_{k=0}^{n-1} |\ph(\sigma^kx) - \ph(\sigma^ky)|
\leq \sum_{k=0}^{n-1} |\ph|_\alpha 2^{-(n-k)\alpha}
< \frac{|\ph|_\alpha}{1-2^{-\alpha}}.
\]
In particular, writing $V := |\ph|_\alpha(1-2^{-\alpha})^{-1}$, we have
\begin{equation}\label{eqn:Bowen}
|S_n\ph(x) - \Phi(w)| \leq V
\text{ for all } n\in \NN, w\in \LLL_n, \text{ and } x\in [w].
\end{equation}
This has the corollary that for every $v,w\in \LLL$ with $|v|=|w|$, we have
\begin{equation}\label{eqn:Bowen-2}
|\Phi(v) - \Phi(w)| \leq V\dH(v,w).
\end{equation}

\begin{lemma}\label{lem:gamma-L}
For every $\gamma>0$ there is $L>0$ such that for every $n_1,\dots, n_\ell\in \NN$ we have
\begin{equation}\label{eqn:sum-g}
\sum_{i=1}^\ell g(n_i) \leq \ell \Big( L + \gamma \log \frac{\sum n_i}\ell \Big).
\end{equation}
\end{lemma}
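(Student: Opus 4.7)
The plan is to reduce the statement to a pointwise bound of the form $g(n) \leq L + \gamma \log n$ for all $n\in \NN$, and then to apply Jensen's inequality for the concave function $\log$.

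First I would use the hypothesis $\lim_{n\to\infty} g(n)/\log n = 0$ (from Theorem~\ref{thm:hyperbolic}) to pick $n_*$ large enough that $g(n) \leq \gamma \log n$ for every $n \geq n_*$. Since $g$ is nondecreasing, for $n < n_*$ we have $g(n) \leq g(n_*)$, and for such $n$ the term $\gamma \log n$ is nonnegative (interpreting $\log 1 = 0$ and noting $\NN$-valued arguments with $n\geq 1$). Setting $L := g(n_*)$ therefore gives the uniform bound $g(n) \leq L + \gamma \log n$ for every $n\in \NN$.

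Next I would sum this bound over $i=1,\dots,\ell$ to get
\[
\sum_{i=1}^\ell g(n_i) \leq \ell L + \gamma \sum_{i=1}^\ell \log n_i.
\]
Since $\log$ is concave, Jensen's inequality yields
\[
\frac{1}{\ell} \sum_{i=1}^\ell \log n_i \leq \log\Big(\frac{1}{\ell} \sum_{i=1}^\ell n_i\Big) = \log \frac{\sum n_i}{\ell},
\]
and multiplying through by $\ell$ and combining with the previous display gives exactly \eqref{eqn:sum-g}.

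There is no real obstacle here: the whole content is the sublogarithmic hypothesis on $g$, which immediately produces the affine-in-$\log$ majorant, together with the standard fact that $\log$ is concave. The only mild subtlety is ensuring that the pointwise bound is valid uniformly in $n$ (including small $n$ where $g$ may be large relative to $\log n$), which is handled by absorbing the finite initial segment into the additive constant $L$.
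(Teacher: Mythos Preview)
Your proof is correct and follows essentially the same approach as the paper: both use the sublogarithmic hypothesis to bound $g(n)$ by $L$ for small $n$ and by $\gamma\log n$ for large $n$, then apply concavity of $\log$ (Jensen) to pass from $\sum_i \log n_i$ to $\ell\log(\tfrac{\sum n_i}{\ell})$. The only cosmetic difference is that you package the two regimes into a single pointwise bound $g(n)\leq L+\gamma\log n$ before summing, whereas the paper splits the sum according to whether $n_i\leq K$ or $n_i>K$; the content is identical.
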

\begin{proof}
Since $g(n)/\log n \to 0$, there exists $K\in \NN$ such that
\begin{equation}\label{eqn:K}
g(n) < \gamma \log(n) \text{ for all } n>K.
\end{equation}
Let $L := \max\{g(n) : 1\leq n\leq K\}$.  Then we have the following estimate: given any $n>K$, $\ell\in \NN$, and
$n_1, \dots, n_{\ell}\in \NN$ such that $\sum_{i=1}^{\ell} n_i = n$, 
we have
\begin{equation}\label{eqn:sumgni}
\begin{aligned}
\sum_{i=1}^{\ell} g(n_i)
&\leq \sum_{\{ i : n_i \leq K\}} g(n_i) + \sum_{\{ i : n_i > K\}} g(n_i) \\
&\leq L \#\{i : n_i \leq K\} + \sum_{\{i : n_i > K\}} \gamma \log n_i \\
&\leq L \ell + \gamma \ell \log(n/\ell)
= \ell (L + \gamma\log (n/\ell)).
\end{aligned}
\end{equation}
The last inequality uses convexity; the function $(x_1, \dots, x_{\ell})\mapsto \sum_i \log x_i$ is maximized (subject to the constraint $\sum x_i = n$) when $x_1 = \cdots = x_{\ell} = n/\ell$, for which values we have $\sum_i \log x_i = \ell \log(n/\ell)$.
\end{proof}

For the duration of the proof, we fix $0 < \gamma < (16m^2 V)^{-1}$, and let $L$ be given by Lemma \ref{lem:gamma-L}.  Without loss of generality, we assume that $L\geq 2m$.
Finally, with $V,\beta,m,\gamma,L$ fixed, we choose $\delta>0$ small enough that
\begin{equation}\label{eqn:delta}
\frac{|\log\delta|}{8 m^2}
> 2\log \Big( \frac{2L + \gamma |\log\delta|}{\beta m} \Big) + 4VL.
\end{equation}

\subsection{Construction of nearby words}\label{sec:generate}

To prove hyperbolicity of $\ph$ it suffices to show that for every $x\in X$, we have
$P(\ph) > \ulim_{n\to\infty} \frac 1n S_n\ph(x)$.  To this end, we take $w\in \LLL$ to be a (sufficiently long) word, and estimate $\Lambda_{|w|}(\LLL,\ph)$ in terms of $e^{\Phi(w)}$.

Let $m\in \NN$ be as above.  Given $n \gg m$ with $(2m)|n$, fix  $k_n \in [\delta n, 2\delta n] \cap \NN$, and let
\[
\mathcal{J}_n = \{ \vn = (n_1,\dots, n_{k_n}) : \textstyle \sum n_i = n \text{ and } (2m)|n_i \text{ for all } i\}.
\]
Given $\vn \in \mathcal{J}_n$, let $N_j = n_1 + n_2 + \cdots + n_{j-1}$ be the partial sums.  For a fixed $w\in \LLL_n$, we will associate to each $\vn\in \mathcal{J}_n$ a word $\psi(\vn) \in \LLL_n$ such that 
\begin{enumerate}
\item $\psi(\vn)$ is Hamming-close to $w$ on the intervals $(N_i,N_{i+1}-m]$;
\item $\psi(\vn)$ is Hamming-far from $w$ on the intervals $(N_i - m, N_i]$.
\end{enumerate}
This will allow us to decipher $\vn$ from $\psi(\vn)$ up to some (controllable) error; that is, we will be able to control the multiplicity of the map $\psi\colon \mathcal{J}_n \to \LLL_n$.  Moreover, each $\psi(\vn)$ will have ergodic sum $\Phi(\psi(\vn))$ that is close to $\Phi(w)$.  These two facts, together with an estimate on $\#\mathcal{J}_n$, will give us the desired lower bound on $\Lambda_n(\LLL,\ph)$.

Let us make this more precise.  Given $\vn$, we have $n_i \geq 2m \geq m+2N$ for all $i$, and so applying Lemma \ref{lem:get-free} to $w_{(N_i,N_{i+1} - m]} \in \LLL_{n_i - m}$ gives $v^i\in \FFF_{n_i - m}$ such that
\begin{equation}\label{eqn:NN'}
\dH(w_{(N_i,N_{i+1} - m - a_i]}, v^i_{(a_i,n_i - m]}) \leq g(n_i) + 2N \text{ for some } 0\leq a_i < N.
\end{equation}
Consequently, we have
\begin{equation}\label{eqn:viw}
\dH(v^i, w_{(N_i - a_i, N_{i+1} - m - a_i])} \leq g(n_i) + 3N \leq g(n_i) + m.
\end{equation}
Moreover, by Lemma \ref{lem:Ham-diam} there are words $s^i \in \FFF_m$ such that
\begin{equation}\label{eqn:si}
\dH(s^i, w_{(N_i - m - a, N_i - a]}) \geq \beta m \text{ for all } 1\leq a \leq N.
\end{equation}
Now we can define the map $\psi =\psi_w \colon \mathcal{J}_n \to \LLL_n$ by 
\begin{equation}\label{eqn:psi}
\psi(\vn) = v^1s^1v^2s^2 \cdots v^{k_n} s^{k_n}.
\end{equation}
Summing over all $\vn\in \mathcal{J}_n$ gives
\[
\log\Lambda_n(\LLL,\ph)
\geq \Phi(w) + \log\#\mathcal{J}_n - \max_{\vn \in \mathcal{J}_n} |\Phi(\psi(\vn)) - \Phi(w)| - \max_{u\in \LLL_n} \#\psi^{-1}(u).
\]
If we divide both sides by $n$, send $n\to \infty$, and write
\begin{align*}
h_\JJJ &:= \llim_{n\to\infty} \frac 1n \log \#\JJJ_n, \\
\Delta_\Phi &:= \ulim_{n\to\infty} \frac 1n \max_{w\in \LLL_n}\max_{\vn \in \mathcal{J}_n} |\Phi(\psi_w(\vn)) - \Phi(w)|, \\
h_\psi &:= \ulim_{n\to\infty} \frac 1n \max_{w\in \LLL_n} \max_{u\in \LLL_n} \#\psi_w^{-1}(u),
\end{align*}
we get
\begin{equation}\label{eqn:main-P}
P(\ph) \geq \sup I + h_\JJJ - \Delta_\Phi - h_\psi,
\end{equation}
where we recall that
\[
I = \Big\{ \int\ph\,d\mu : \mu \in \MMM_\sigma(X)\Big\}
= \Big[ \inf_{x\in X} \llim_{n\to\infty} \frac 1n S_n\ph(x),
\sup_{x\in X} \ulim_{n\to\infty} \frac 1n S_n\ph(x) \Big].
\]
To complete the proof of Theorem \ref{thm:hyperbolic}, it suffices to show that $h_\JJJ > \Delta_\Phi + h_\psi$, which we do in the next section.

\subsection{Estimates on errors and entropy}

\subsubsection{Entropy gained from $\JJJ$}

Using \eqref{eqn:binom} and the definition of $\JJJ_n$, we have
\[
\log \#\mathcal{J}_n = \log\binom{\frac n{2m}}{k_n}
\geq h\Big( \frac{\delta}{2m}\Big) \frac n{2m} + O(\log n),
\]
and thus
\begin{equation}\label{eqn:hJ}
h_\JJJ \geq \frac \delta{4m^2} \Big|\log \frac \delta{2m} \Big|
\geq \frac{\delta}{4m^2} |\log\delta|.
\end{equation}

\subsubsection{Errors in ergodic sums}

Given any $w\in \LLL_n$ and $\vn\in \JJJ_n$, with $v^i$ as in the definition of $\psi$ we see from \eqref{eqn:Bowen-2} and \eqref{eqn:NN'} that
\[
|\Phi(w_{(N_i,N_{i+1}-m]}) - \Phi(v^i)| \leq (g(n_i) + 3N)V \leq (g(n_i) + m)V,
\]
and hence $|\Phi(w_{(N_i, N_{i+1}]} - \Phi(v^i s^i)| \leq (g(n_i) + 2m)V$.
Summing over all $i$ and using Lemma \ref{lem:gamma-L} gives
\[
|\Phi(\psi(\vn)) - \Phi(w)| \leq \sum_{i=1}^{k_n} (g(n_i) + 2m) V
\leq k_n(L + 2m + \gamma \log(n/k_n))V,
\]
and since $L\geq 2m$ we get
\begin{equation}\label{eqn:PhiPhi}
\max_{w\in \LLL_n} \max_{\vn\in \JJJ_n} |\Phi(\psi(\vn)) - \Phi(w)|
\leq k_n(2L + \gamma\log(n/k_n))V.
\end{equation}
Dividing by $n$ and using $k_n \in [\delta n, 2\delta n]$ gives
\begin{equation}\label{eqn:DeltaPhi}
\Delta_\Phi \leq 2\delta V(2L + \gamma |\log\delta|).
\end{equation}

\subsubsection{Multiplicity of $\psi$}

Given $u\in \LLL_n$, let
\begin{multline*}
R_u= \{ j\in [1,n] : m|j \text{ and } \dH(u_{[j,j+m)}, w_{[j-a, j+m-a)}) \geq \beta m \\ \text{ for all } 0\leq a< N\}.
\end{multline*}
It follows from \eqref{eqn:si} that $\{N_i\}_{i=1}^{k_n} \subset R_{\psi(\vn)}$ for all $\vn\in \mathcal{J}_n$.  Moreover, given $\vn\in \mathcal{J}_n$ we see from \eqref{eqn:viw} that $u = \psi(\vn)$ has
\begin{equation}\label{eqn:ai}
\sum_{j=N_i/m}^{(N_{i+1}/m) - 1}
\dH(u_{(jm,(j+1)m]},w_{(jm-a_i,(j+1)m-a_i]}) \leq g(n_i) + 2m
\end{equation}
for every $1\leq i\leq n_k$, and summing over $i$ gives
\begin{equation}\label{eqn:betamRu}
\begin{aligned}
\beta m \cdot \#R_u
&\leq \sum_{j=1}^{n/m} \min_{0\leq a <  N} \dH(u_{[jm,jm+m)},w_{[jm-a,jm+m-a)}) \\
&\leq \sum_{i=1}^{k_n} (g(n_i) + 2m )
\leq k_n (2L + \gamma |\log\delta| ),
\end{aligned}
\end{equation}
where the last inequality again uses Lemma \ref{lem:gamma-L} and the inequalities $L\geq 2m$, $k_n \geq \delta n$.  Thus we have
\[
\#R_u \leq k_n \cdot \frac{2L + \gamma|\log\delta|}{\beta m},
\]
and since $\vn\in \mathcal{J}_n$ is determined by a choice of $k_n$ elements from $R_u$, we conclude from \eqref{eqn:binom} that
\[
\log \#\psi^{-1}(u) 
\leq h\Big( \frac{\beta m}{2L + \gamma|\log \delta|} \Big) \frac{2\delta n}{\beta m}(2L+\gamma|\log\delta|) + 
O(\log n),
\]
and so
\begin{equation}\label{eqn:hpsi}
\begin{aligned}
h_\psi &\leq \frac{\beta m}{2L + \gamma|\log \delta|} \log \Big(\frac{2L+\gamma|\log\delta|}{\beta m}\Big) \frac{2\delta}{\beta m}(2L + \gamma|\log\delta|) \\
&=
2\delta \log\Big(\frac{2L+\gamma|\log\delta|}{\beta m}\Big)
\end{aligned}
\end{equation}

\subsubsection{Completion of the proof}

Combining \eqref{eqn:hJ}, \eqref{eqn:DeltaPhi}, and \eqref{eqn:hpsi}, we get
\[
\frac{h_\JJJ - \Delta_\Phi - h_\psi}{\delta} \geq
\frac{|\log \delta|}{4m^2}
- 4VL - 2V\gamma|\log \delta| - 2\log\Big(\frac{2L+\gamma|\log\delta|}{\beta m}\Big).
\]
Since we chose $\gamma$ to be smaller than $(16m^2 V)^{-1}$, we have
\[
\frac{|\log \delta|}{8m^2} - 2V\gamma|\log\delta| > 0,
\]
and thus
\[
\frac{h_\JJJ - \Delta_\Phi - h_\psi}{\delta} >
\frac{|\log \delta|}{8m^2} - 4VL - 2\log\Big(\frac{2L+\gamma|\log\delta|}{\beta m}\Big).
\]
The right-hand side is positive by our choice of $\delta$ in \eqref{eqn:delta}, and we conclude that $h_\JJJ > \Delta_\Phi + h_\psi$.  By \eqref{eqn:main-P}, this gives $P(\ph) > \sup I$, which completes the proof of Theorem \ref{thm:hyperbolic}.

\section{Proof of Theorem \ref{thm:context-free}}

Now we consider the shift space $X$ described in Theorem \ref{thm:context-free}.  Write $\LLL$ for the language of $X$ and $f\colon \NN\to \NN$ for the function used to define $G = \{0^a 1^b : a,b\geq f(a+b)\}$.  
Recall that $\ph = -\one_{[1]}$.  Before we prove the five statements listed in the theorem, we demonstrate that $P(t\ph)$ is nonnegative and nonincreasing.
Let $\delta_0$ be the $\delta$-measure on the fixed point $0 \in X$.  Then 
for every $t\in \RR$ we have
$P(t\ph) \geq h_{\delta_0}(\sigma) + t \int \ph\,d\delta_0 = t\ph(0) = 0$.
Since $\ph\leq 0$ it follows from basic properties of pressure that whenever $s < t$, we have $P(t\ph) = P(s\ph + (t-s)\ph) \leq P(s\ph + (t-s)0) = P(s\ph)$, so the pressure function is nonincreasing.

\subsection{Hamming approachability}\label{sec:hamming-approachability}

Let $n_1$ be such that $f(n) \leq n/2$ for all $n\geq n_1$; in particular, for all $n\geq n_1$ there are $a,b\geq f(n)$ such that $a+b=n$, and thus $0^a1^b \in G$.  We need the following lemma.

\begin{lemma}\label{lem:u01v}
Given $n\geq n_1$ and $w\in \LLL_n$, suppose that $w$ can be written as $w=u0^a1^bv$ for some $u,v\in \LLL$ with $|u|,|v|\leq n_1$ and $a,b\geq 0$.  (Note that $u,v$ are allowed to be empty.)
Then there is $\tilde w\in G$ such that $\dH(w,\tilde w) \leq n_1 + \max(f(n),n_1)$.
\end{lemma}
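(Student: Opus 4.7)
The plan is to construct $\tilde w = 0^{a'} 1^{b'}$ with $a'+b'=n$ and $a',b'\geq f(n)$ (so that $\tilde w \in G_n$; the interval $[f(n),n-f(n)]$ for the boundary $a'$ is nonempty because $f(n)\leq n/2$ for $n\geq n_1$). The natural choice is to put the switch point of $\tilde w$ at $p := |u|+a$, the switch point of $w$, since then the middle $0^a 1^b$ of $w$ and the middle of $\tilde w$ align perfectly. When $p$ lies outside $[f(n),n-f(n)]$ one simply clips it to the nearest endpoint: set $a' := \min(\max(p,f(n)),n-f(n))$ and $b' := n-a'$.

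I would then bound $\dH(w,\tilde w)$ by splitting into three cases. If $a'=p$ (the unclipped case), the only possible mismatches between $w=u\,0^a 1^b\,v$ and $\tilde w=0^p 1^{n-p}$ are at positions of $u$ and $v$, giving $\dH\leq |u|+|v|\leq 2n_1$. If $a'=f(n)>p$, I would count mismatches separately in the two halves: among positions $1,\dots,a'$ (where $\tilde w$ is $0$), the $1$s of $w$ come only from the $u$-block (at most $|u|$) and from the initial portion of the $1^b$-block that got absorbed into the $0$-prefix (exactly $a'-p$ positions, each $=1$), totaling $\leq |u|+(f(n)-p)=f(n)-a\leq f(n)$; among positions $a'+1,\dots,n$ (where $\tilde w$ is $1$), the $0$s of $w$ lie only in $v$, so are $\leq|v|\leq n_1$. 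This yields $\dH\leq f(n)+n_1$. The case $a'=n-f(n)<p$ is symmetric with $u,v$ swapped. Taking the maximum over cases gives the required bound $n_1+\max(f(n),n_1)$.

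The one obstacle I expect is a boundary subtlety in the second case: my count implicitly uses that the $1^b$-block reaches at least to position $a'=f(n)$, i.e., $p+b\geq f(n)$. If instead $p+b<f(n)$, then $v$ spills into the first $a'$ positions and the count above breaks; but this forces $|v|>n-f(n)\geq n/2$, which together with $|v|\leq n_1$ gives $n<2n_1$. In this short-word regime I would simply pick any $\tilde w\in G_n$ (which exists since $n\geq n_1$) and invoke the trivial bound $\dH(w,\tilde w)\leq n<2n_1\leq n_1+\max(f(n),n_1)$. This dispenses with the degenerate case and completes the argument.
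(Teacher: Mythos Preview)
Your proof is correct and uses the same idea as the paper: take $\tilde w = 0^{a'}1^{n-a'}$ with $a'$ obtained by clipping a natural switch point to $[f(n),n-f(n)]$, and split into three cases. Two small differences are worth noting. First, the paper clips $n_1+a$ rather than your $p=|u|+a$. Second, in the clipped cases the paper avoids your degenerate sub-case by counting more crudely: when $a'=f(n)$ it bounds the first-half mismatches trivially by $f(n)$ (there are only $f(n)$ positions), and bounds the second-half mismatches by $|v|\leq n_1$ via the observation that every $0$ of $w$ beyond position $f(n)$ must lie in $v$, since $u0^a$ ends at position $p<f(n)$. This argument works regardless of whether $v$ spills into the first half, so no separate short-word case is needed.
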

\begin{proof}
If $n_1 + a < f(n)$, then $\tilde w = 0^{f(n)} 1^{n-f(n)} \in G$ satisfies
\begin{multline*}
\dH(w, \tilde w) \leq 
\dH(w_{[1,f(n)]}, 0^{f(n)}) + \dH(w_{(f(n),n]}, 1^{n-f(n)}) \\
 = \dH(u0^a 1^{f(n) - a - |u|}, 0^{f(n)}) + \dH(1^{n-f(n)-|v|}v,1^{n-f(n)}) \leq f(n) + n_1.
\end{multline*}
Similarly, if $n_1 + a > n-f(n)$, then $\tilde w = 0^{n-f(n)} 1^{f(n)} \in G$ satisfies
\[
\dH(w, \tilde{w}) \leq n_1 + f(n).
\]
Finally, if $f(n) \leq n_1 + a \leq n-f(n)$, then $\tilde{w} = 0^{n_1+a}1^{n-n_1-a}$ satisfies $\dH(w,\tilde w) \leq 2n_1$.
\end{proof}

Now given any $w\in \LLL$ with $|w| \geq 2n_1$, there are integers $0 = \ell_0 <\ell_1 < \cdots < \ell_m = n$ such that
\begin{gather*}
w_{(\ell_{i-1},\ell_i]} = 0^{a_i} 1^{b_i}
\text{ for all } 1\leq i\leq m, \\
a_i, b_i \geq f(a_i+b_i) \text{ for all } 1 < i < m, \\
a_1,b_1,a_m,b_m \geq 0.
\end{gather*}
Choose $0\leq j \leq k \leq m$ such that
\[
n_1 \in (\ell_{j-1},\ell_j] \text{ and }
n-n_1 \in (\ell_{k-1},\ell_k].
\]
If $j=k$ then $w$ has the form required for Lemma \ref{lem:u01v}, and thus there is $\tilde w \in G$ such that $\dH(w,\tilde w) \leq n_1 + \max(f(n),n_1)$.  If $j<k$, then we can write $w = w^p w^c w^s$, where
\[
w^p := w_{(0,\ell_{j+1}]},
\quad w^c := w_{(\ell_{j+1},\ell_k]},
\quad w^s := w_{(\ell_k,n]}.
\]
Note that $w^c \in \FFF$, and $w^p,w^s$ both have the form required for Lemma \ref{lem:u01v}, so taking $\tilde w^p$ and $\tilde w^s$ as given by that lemma, we have $\tilde w^p w^c \tilde w^s \in G^*$ and
\[
\dH(w,\tilde w^p w^c \tilde w^s)
\leq \dH(w^p,\tilde w^p) + \dH(w^s,\tilde w^s)
\leq 2n_1 + 2\max(f(n),n_1).
\]
This proves the first item in Theorem \ref{thm:context-free}.

\subsection{Hyperbolicity when $P(t\ph)>0$}

Let $I_t = \{\int t\ph\,d\mu : \mu\in \MMM_\sigma(X)\}$.  
The second statement in Theorem \ref{thm:context-free} is equivalent to the claim that when $t\geq 0$, we have $P(t\ph) > \sup I_t$ if and only if $t<t_0$, where $t_0$ is the first root of Bowen's equation \eqref{eqn:t0}.  Since $t\mapsto P(t\ph)$ is nonincreasing, we see that $t<t_0$ if and only if $P(t\ph) > 0$.  On the other hand, since $\int \ph\,\delta_1 = -1\leq \ph\leq 0 = \int\ph\,\delta_0$, we have $I_t = [-t,0]$ for all $t\geq 0$, and so $\sup I_t = 0$, which proves the desired equivalence.

\subsection{Unique equilibrium state when $t<t_0$}

To deduce uniqueness of the equilibrium state for $t\ph$ when $0\leq t<t_0$, we apply Theorem \ref{thm:coded}.  (Positive entropy of the equilibrium state will then follow since $t\ph$ is hyperbolic.)  The shift $X$ is coded with generating set $G = \{0^a 1^b : a,b\geq f(a+b)\}$.  This is uniquely decipherable because if $w = u^1 u^2 \cdots u^m$ with $u^i\in G$, then we can recover $u^1$ from $w$ as the longest initial segment of the form $0^a 1^b$ with $a,b\geq 1$, then $u^2$ from the remainder of $w$ by the same procedure, and so on.
Moreover, the set
\[
\DDD = \DDD(G) := \{w\in \LLL : w\text{ is a subword of some } g\in G \}
\]
is easily seen to satisfy $\DDD \subset \{0^a 1^b : a,b\geq 0\}$, and hence $\#\DDD_n \leq n+1$, so $h(\DDD) = 0$.  We conclude that
\[
P(\DDD,t\ph) \leq h(\DDD) + \sup I_t = \sup I_t \text{ for all } t,
\]
and since we showed that $t\ph$ is hyperbolic whenever $0\leq t < t_0$, we conclude that $P(\DDD,t\ph) < P(t\ph)$ for this range of $t$, and so we can apply Theorem \ref{thm:coded}.

\subsection{Only the delta measure past $t_0$}

Since $t\mapsto P(t\ph)$ is nonincreasing and nonnegative, we have $P(t\ph) = P(t_0\ph) = 0$ for all $t\geq t_0$.  Thus $\delta_0$ is an equilibrium state for all $t\geq t_0$.  When $t>t_0$, we observe that every other $\mu\in \MMM_\sigma(X)$ has $\mu[1]>0$ and hence $\int \ph\,d\mu < 0$, so
\begin{multline*}
h_\mu(\sigma) + \int t\ph\,d\mu = h_\mu(\sigma) + \int t_0\ph\,d\mu + \int (t-t_0) \ph\,d\mu  \\
\leq P(t_0 \ph) + (t-t_0)\int\ph\,d\mu < 0,
\end{multline*}
which shows that $\delta_0$ is the unique equilibrium state on this range of $t$.

\subsection{Bowen's equation has a root if and only if $\sum \gamma^{f(n)} < \infty$}

For the final statement in Theorem \ref{thm:context-free}, we fix $t>0$ and study the power series
\[
F(x):=\sum_{n=1}^{\infty}\Lambda_n(G,t\ph)x^n 
\quad\text{and}\quad
H(x):=1+\sum_{n=1}^{\infty}\Lambda_n(G^*,t\ph)x^n.
\]

\begin{proposition}\label{prop:tfae}
For the shift space in Theorem \ref{thm:context-free} and $t>0$, the following are equivalent.
\begin{enumerate}[label=\textup{(\alph{*})}]
\item\label{a} $P(t\ph) = 0$.
\item\label{b} The power series $H(x)$ 
converges for every $0\leq x < 1$.
\item\label{c} The power series $F(x)$ converges for every $0\leq x < 1$, with $F(x) < 1$.
\item\label{d} The power series $F(x)$ converges for $x=1$, with $F(1) \leq 1$.
\end{enumerate}
\end{proposition}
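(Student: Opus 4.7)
The plan is to encode the three partition sums as generating functions and use two algebraic identities to chain through $(a) \Leftrightarrow (b) \Leftrightarrow (c) \Leftrightarrow (d)$. Write
\[
L(x) := \sum_{n \geq 0} \Lambda_n(\LLL, t\ph) x^n.
\]
Since $P(t\ph) \geq 0$ is already known and $P(t\ph)$ equals the reciprocal of the radius of convergence of $L$, condition (a) is equivalent to $L(x) < \infty$ for every $0 \leq x < 1$. So the task reduces to relating the convergence and value of $F$ and $H$ to that of $L$.

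For $(a) \Leftrightarrow (b)$: every $w \in \LLL$ admits a unique decomposition $w = 1^{b_0} v\, 0^{a_{k+1}}$, where $1^{b_0}$ and $0^{a_{k+1}}$ are the maximal leading 1-run and trailing 0-run of $w$, and the middle $v$ (which, when nonempty, starts with 0 and ends with 1) lies in $G^*$ or is empty. Uniqueness of this decomposition follows from unique decipherability of $G$ together with the fact that the three pieces are determined by the run structure of $w$. Since $\ph = -\one_{[1]}$, we have $\Phi(uu') = \Phi(u) + \Phi(u')$, and summing $e^{\Phi(w)}$ over this decomposition yields the convolution identity
\[
L(x) \;=\; \frac{1}{1 - e^{-t}x} \cdot H(x) \cdot \frac{1}{1 - x}.
\]
The outer two factors are finite on $[0,1)$, so $L(x) < \infty$ on $[0,1)$ iff $H(x) < \infty$ on $[0,1)$, i.e., iff (b) holds.

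For $(b) \Leftrightarrow (c)$: unique decipherability of $G$ combined with additivity of $\Phi$ gives the parallel identity
\[
H(x) \;=\; \sum_{k \geq 0} F(x)^k,
\]
which equals $1/(1 - F(x))$ when $F(x) < 1$ and diverges otherwise. Hence $H(x) < \infty$ iff $F(x) < 1$, which a fortiori implies $F(x) < \infty$. For $(c) \Leftrightarrow (d)$: since $F$ has nonnegative coefficients, $F$ is nondecreasing on $[0,1]$ and $F(x) \uparrow F(1) \in [0,\infty]$ as $x \uparrow 1$ by monotone convergence. So if (c) holds, passing to the limit gives $F(1) \leq 1$; conversely, if $F(1) \leq 1$, then $F(x) < F(1) \leq 1$ strictly for every $x < 1$, since at least one $\Lambda_n(G,t\ph)$ is positive.

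The main obstacle is the decomposition underlying the identity $L(x) = (1-e^{-t}x)^{-1} H(x) (1-x)^{-1}$: one must carefully argue that every word of $\LLL$ decomposes uniquely in the stated way, using that $X$ is coded, that $G$ consists of words of the form $0^a 1^b$, and that $G$ is uniquely decipherable. The subsequent algebraic steps, including the passage $F(x) < 1 \Leftrightarrow F(1) \leq 1$ via monotone convergence, are routine manipulations.
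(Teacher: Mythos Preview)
Your arguments for $(b)\Leftrightarrow(c)$ and $(c)\Leftrightarrow(d)$ are correct and match the paper. The gap is in $(a)\Leftrightarrow(b)$: the decomposition $w = 1^{b_0}\,v\,0^{a_{k+1}}$ with $v\in G^*$ simply does not hold for all $w\in\LLL$. Recall $G=\{0^a1^b : a,b\geq f(a+b)\}$, so a subword of an element of $G^*$ can begin inside the $0$-run of a block, producing a leading piece $0^{a'}1^{b}$ with $a'<f(a'+b)$; such a piece is \emph{not} in $G$, and hence the middle word (which now starts with $0$ and ends with $1$) is not in $G^*$. Concretely, take $f\equiv 2$: then $0^21^5\in G$, so $w=01^5\in\LLL$, but your decomposition would force $v=01^5$, which is neither in $G$ nor in $G^*$. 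Consequently your identity $L(x)=(1-e^{-t}x)^{-1}H(x)(1-x)^{-1}$ is only an inequality $L(x)\geq(1-e^{-t}x)^{-1}H(x)(1-x)^{-1}$, which gives $(a)\Rightarrow(b)$ but not the converse.

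The paper fixes exactly this by enlarging the prefix and suffix classes to
\[
\mathcal{P}=\{0^a1^b : a<f(a+b)\},\qquad \mathcal{S}=\{0^a1^b : b<f(a+b)\},
\]
so that every $w\in\LLL$ decomposes uniquely as $w=u^pvu^s$ with $u^p\in\mathcal{P}$, $v\in G^*$, $u^s\in\mathcal{S}$. (Note $\{1^b\}\subset\mathcal{P}$ and $\{0^a\}\subset\mathcal{S}$, so these really are enlargements.) The key extra work is then to verify that the generating functions $C^{\mathcal{P}}(x)=\sum_n\Lambda_n(\mathcal{P},t\ph)x^n$ and $C^{\mathcal{S}}(x)$ still converge on $[0,1)$; this uses the constraint $f(n)\leq n/2$ to bound the coefficients. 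Once that is done, the sandwich $C^{\mathcal{P}}(x)H(x)C^{\mathcal{S}}(x)\asymp L(x)$ gives both directions. Your outline is on the right track, but you need these larger boundary classes and the convergence check for their generating functions to close the argument.
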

\begin{proof}
\ref{a}$\Leftrightarrow$\ref{b}.
Consider the power series $A(x) = \sum_{n=0}^\infty \Lambda_n(X,t\ph) x^n$ (here $\Lambda_0(X,t\ph) = 1$).
Since $\lim \sqrt[n]{\Lambda_n(X,t\ph)} = e^{P(t\ph)}$, the root test tells us that the radius of convergence of $A(x)$ is $e^{-P(t\ph)} \leq 1$.  In particular, $P(t\ph) = 0$ if and only if $A(x)$ converges for every $0\leq x < 1$, so to prove the first equivalence it suffices to show that the power series $A(x)$ and $H(x)$ converge for the same values of $x\in [0,1)$.
To this end, consider the sets of words
\[
\mathcal{P} = \{0^a1^b : a < f(a+b)\}\text{ and }
\mathcal{S} = \{0^a1^b : b < f(a+b)\}.
\]
Every $w\in \LLL$ admits a unique decomposition as $w = u^p v u^s$ for some $u^p\in \mathcal{P}$, $v\in G^*$, and $u^s\in \mathcal{S}$, and since 
$\Phi(u^pvu^s) = \Phi(u^p) + \Phi(v) + \Phi(u^s)$, we have
\begin{equation}\label{eqn:abc}
\sum_{n=0}^N \Lambda_n(X,t\ph) x^n = \sum_{\substack{a,b,c\geq 0 \\ a+b+c\leq N}} \Lambda_a(\mathcal{P},t\ph) x^a
\Lambda_b(G^*,t\ph) x^b
\Lambda_c(\mathcal{S},t\ph) x^c.
\end{equation}
Consider the power series associated to $\mathcal{P}$ and $\mathcal{S}$:
\[
\Cp(x):=1+\sum_{n=1}^{\infty}
\Lambda_n(\mathcal{P},\ph)x^n
\text{ and }
\Cs(x):=1+\sum_{n=1}^{\infty}
\Lambda_n(\mathcal{S},\ph)x^n.
\]
Write $H_N,A_N, \Cp_N, \Cs_N$ for the partial sums (over $n\leq N$) of the respective power series; then \eqref{eqn:abc} gives
\begin{equation}\label{eqn:N3N}
\Cp_N(x)H_N(x) \Cs_N(x) \leq A_{3N}(x) \leq \Cp_{3N}(x) H_{3N}(x) \Cs_{3N}(x).
\end{equation}
We claim that $\Cp(x)$ and $\Cs(x)$ both converge for all $0\leq x < 1$.  For $\Cs(x)$ we have 
\[
\Cs(x)=1+\sum_{n=1}^{\infty}\left(\sum_{k=0}^{f(n)-1}e^{-tk}\right)x^n
= 1+\sum_{n=1}^{\infty}\left(\frac{1-e^{-tf(n)}}{1-e^{-t}}\right)x^n,
\] 
which has radius of convergence $x=1$ since the 
coefficients lie in the interval $(0,1]$.
Similarly for $\Cp(x)$, we have
\[ 
\Cp(x)=1+\sum_{n=1}^{\infty}\left(\sum_{k=0}^{f(n)-1}e^{-t(n-k)}\right)x^n
=1+\sum_{n=1}^{\infty}\left(\frac{e^{-t(n-f(n))}-e^{-tn}}{e^t-1}\right)x^n,
\] 
and since $1\leq f(n)\leq n/2$ for all sufficiently large $n$, the coefficients converge to $0$ and the radius of convergence of $\Cp(x)$ is greater than or equal to $x=1$.  Thus $\Cp(x)$ and $\Cs(x)$ both converge for all $0\leq x < 1$, and it follows from \eqref{eqn:N3N} that for every such $x$, $H(x)$ converges if and only if $A(x)$ converges.  This proves the equivalence of \ref{a} and \ref{b}.

\medskip\noindent
\ref{b}$\Leftrightarrow$\ref{c}.
Since $X$ is uniquely decipherable we have 
\[ 
\Lambda_n(G^*,t\ph)=\sum_{j=1}^n\sum_{n_1+\cdots+n_j=n}\prod_{i=1}^j\Lambda_{n_i}(G,t\ph).\] 
It follows that whenever $|F(x)|<1$ we have 
\begin{equation}\label{eq:useful} 
H(x)=1+\sum_{k=1}^{\infty}F(x)^k=\frac{1}{1-F(x)} 
\end{equation} 
and if $0\leq x<1$ is such that $F(x)\geq1$, then $H(x)$ does not converge. 

\medskip\noindent
\ref{c}$\Leftrightarrow$\ref{d}. Suppose $F(1)$ converges.  Then $F(x)$ converges for all $|x|<1$ by standard facts on power series, and since all the coefficients are nonnegative (and not all of them vanish), the function $F$ is strictly increasing on $[0,1]$, so $0\leq F(x) < F(1)$ for all $x\in [0,1)$, which proves \ref{d}$\Rightarrow$\ref{c}.

Now we prove  \ref{c}$\Rightarrow$\ref{d}.
Suppose that for all $0\leq x<1$ we have $F(x) < 1$.  Then the partial sums $F_N(x)$ also satisfy $F_N(x)<1$ for all $x\in [0,1)$ and $N\in \NN$, since the coefficients are nonnegative.  By continuity we get $F_N(1) \leq 1$ for all $N\in \NN$, and thus $F(1) \leq 1$.
\end{proof}

By Proposition \ref{prop:tfae}, in order to complete the proof of Theorem \ref{thm:context-free}\ref{root} it suffices to show that there is $t>0$ with $F(1)\leq 1$ if and only if there is $\gamma>0$ such that $\sum_n \gamma^{f(n)} < \infty$.   
Observe that
\begin{equation}\label{eqn:LG}
\Lambda_n(G,t\ph)=\sum_{k=f(n)}^{n-f(n)}e^{-tk}=\frac{e^{-t(f(n)-1)}-e^{-t(n-f(n))}}{e^t-1}
\end{equation}
whenever $f(n) \leq n/2$, and $\Lambda_n(G,t\ph) = 0$ otherwise.
Since $f(n) \leq n/2$ for all sufficiently large $n$, we have
\[
\sum\frac{e^{-t(n-f(n))}}{e^t-1} < \infty,
\]
implying that $F(1)<\infty$ if and only if $\sum_{n=1}^\infty e^{-t(f(n)-1)} / (e^t-1) < \infty$.  In particular, if $F(1) \leq 1$ then $\sum \gamma^{f(n)} < \infty$ for $\gamma = e^{-t}$.  

For the converse direction, suppose that $\gamma>0$  is such that $\sum\gamma^{f(n)}<\infty$.  Then for all $t \geq  -\log\gamma$, \eqref{eqn:LG} gives
\[
\sum_{n=1}^\infty \Lambda_n(G,t\ph) \leq \sum_{n=1}^\infty \frac{e^{-t(f(n)-1)}}{e^t-1}
\leq \sum_{n=1}^\infty \frac{\gamma^{f(n)-1} }{e^t-1} 
\leq \frac 1{\gamma(e^t-1)} \sum_{n=1}^\infty \gamma^{f(n)}.
\]
By taking $t$ sufficiently large, the right-hand side can be made $\leq 1$, so for this value of $t$ we have $F(1) \leq 1$, which completes the proof of Theorem \ref{thm:context-free}.

\bibliographystyle{amsalpha}
\bibliography{hyperbolic-es}

\end{document}